\newcommand{\doBlank}[1]{}
{}
\def\?#1{}
\newcommand{\mcf}{\mathcal{F}}
\newcommand{\mcl}{\mathcal{L}}
\newcommand{\mcc}{\mathcal{C}}
\newcommand{\mbbh}{\mathbb{H}}
\newcommand{\mbbr}{\mathbb{R}}
\newcommand{\mbbl}{\mathbb{L}}
\newcommand{\mbbn}{\mathbb{N}}
\newcommand{\mbbu}{\mathbb{U}}
\newcommand{\mbby}{\mathbb{Y}}
\newcommand{\mbbz}{\mathbb{Z}}
\newcommand{\al}{\alpha}
\newcommand{\sig}{\sigma}
\newcommand{\gam}{\gamma}
\newcommand{\p}{\partial}
\newcommand{\ep}{\epsilon}
\newcommand{\del}{\delta}
\newcommand{\lam}{\lambda}
\newcommand{\cip}{\xrightarrow{p}} 
\def\ds#1{\displaystyle{#1}}
\def\nn{\nonumber}
\def\lp{L\'{e}vy process}
\def\tz{\theta_{0}}
\def\tes{\hat{\theta}_{N}}
\def\aes{\hat{\al}_{N}}
\def\ges{\hat{\gam}_{N}}
\def\pr{\mathbb{P}}
\def\E{\mathbb{E}}
\DeclareMathOperator{\diag}{diag}
\newcommand{\dif}{\mathrm{d}}
\numberwithin{equation}{section}
\theoremstyle{plain}
\newtheorem{thm}{Theorem}[section]
\newtheorem{assumption}{Assumption}
\newtheorem{lemma}{Lemma}[section]
\newtheorem{definition}{Definition}
\newtheorem{proposition}{Proposition}[section]
\newtheorem{theorem}{Theorem}[section]
\newcommand{\writetitle}{0}
\newcommand{\mytitle}[1]
{   \ifthenelse{\writetitle=1}{}{}
}
\newread\mysource
\begin{document}
\title{Bayesian inference for Stable L\'{e}vy driven Stochastic Differential Equations with high-frequency data}
\date{This version: \today}
\author[1]{Ajay Jasra\thanks{staja@nus.edu.sg}}
\author[2]{Kengo Kamatani\thanks{kamatani@sigmath.es.osaka-u.ac.jp}}
\author[3]{Hiroki Masuda\thanks{hiroki@math.kyushu-u.ac.jp}}
\affil[1]{Department of Statistics  Applied Probability, National University of Singapore, SG}
\affil[2]{Department of Engineering Science, Osaka University, JP}
\affil[3]{Faculty of Mathematics, Kyushu University, JP}

\maketitle
\begin{abstract} 
In this article we consider parametric Bayesian inference for stochastic differential equations (SDE) driven by a pure-jump stable L\'{e}vy process, which is observed at high frequency. In most cases of practical interest, the likelihood function is not available, so we use a quasi-likelihood and place an associated prior on the unknown parameters. It is shown under regularity conditions that  there is a Bernstein-von Mises theorem associated to the posterior. We then develop a Markov chain Monte Carlo (MCMC) algorithm for Bayesian inference and assisted by our theoretical results, we show how to scale Metropolis-Hastings proposals when the frequency of the data grows, in order to prevent the acceptance ratio going to zero in the large data limit. Our algorithm is presented on numerical examples that help to verify our theoretical findings.
\end{abstract}
{\bf Keywords:} Markov Chain; Monte Carlo; L\'{e}vy process; Bayesian inference; high-frequency data

\section{Introduction}

Stochastic differential equations (SDE) are found in a wide variety of real applications, such as finance and econometrics (see for instance \cite{cont} and the references therein), mathematical biology, movement ecology, turbulence, signal processing, to mention just a few. In this article we are concerned with Bayesian parameter estimation of discretely observed SDE driven by a pure-jump stable L\'{e}vy process with high-frequency data, which is often found in the afore-mentioned applications.
Among others, we refer to \cite{JW94}, \cite{SamTaq}, and \cite{MR1739520} for a comprehensive account of stable distribution and stochastic processes driven by a stable {\lp}.

Often, the main challenge with Bayesian or classical inference with L\'{e}vy driven SDE is the lack of tractability of the transition density of the process, hence one cannot evaluate a non-negative and unbiased estimate of the transition density. The latter is often required for inference methods.
In such scenarios, one often has to resort to time-discretization. We focus on the time discretization induced by the quasi-likelihood
approach, which, under suitable regularity conditions, enables us to deduce several desirable properties, such as consistency and asymptotic (mixed) normality; see \cite{masuda13,masuda16.sqmle} and the references therein.
As a result, one expects rather favorable properties of the associated posterior distribution in similar settings. In addition,
one must construct inference techniques, in this article MCMC, which will turn out to be robust in the high-frequency limit. That is, algorithms that will not collapse in some sense.

Bayesian inference for diffusions and jump diffusions have been considered in many articles.
This includes the fully observed (jump) diffusion case \cite{golight,roberts} and the partially observed jump diffusion 
case \cite{bns,gander,griffin,jasra}. In particular the work \cite{peters} considered some related, but different classes of models to this article,
except using approximate Bayesian computation \cite{marin} methods with MCMC. 
We note that most of the previous works do not consider quasi-likelihood and/or the large data limit of the performance of MCMC.
Our contributions of this article are roughly summarized as follows:
\begin{itemize}
\item{To construct an approximate posterior distribution with favorable theoretical properties.
That is, under assumptions, that there is a Bernstein-von Mises theorem associated to the posterior.}
\item{To develop an MCMC algorithm and assisted by our theoretical results, show how to scale
Metropolis-Hastings proposals when the frequency of the data grows, in order to prevent the acceptance ratio going to zero in the large data limit.}
\end{itemize}


This article is structured as follows. In Section \ref{sec:model}, the model setup and an associated Bernstein-von Mises theorem are given.
Our MCMC algorithm is also described. In Section \ref{sec:theory},
our theoretical results for MCMC are stated. Section \ref{sec:simos} provides some numerical simulations which confirm our theoretical findings.
We also analyze a real data set from the NYSE 
which features properties often captured well by the processes we study in this article. 
The appendix features a variety of proofs for our technical results.

\section{Stable-L\'{e}vy SDEs, Model and Algorithm}\label{sec:model}

\subsection{Setup}\label{sec:setup}

Let $(\Omega,\mathcal{F},(\mcf_{t})_{t\in[0,T]},\mathbb{P})$ be a complete stochastic basis. 
Let $\{X_t;t\in [0,T]\}$ be a solution to the one-dimensional stochastic differential equation
\begin{align}
	\dif X_t=a(X_t,\alpha)\dif t+c(X_{t-},\gamma)\dif J_t,
\label{hm:SDE}
\end{align}
where $\{J_t\}_t$ is the stable L\'{e}vy-process independent of the initial variable $X_{0}$ and such that
\begin{equation}
\mathbb{E}(e^{iuJ_{1}})=e^{-|u|^{\beta}},
\nonumber
\end{equation}
where it is assumed throughout that $\mcf_{t}=\sig(J_{s}: s\le t)\vee\sig(X_{0})$ and that $1\le \beta <2$.
Denote by $\phi_{\beta}$ the $\beta$-stable density of $\mcl(J_{1})$.
We will write $\mathbb{P}_{\theta}$ for the image measure of $\mcl(X_{t})$ associated with the parameter value
\begin{equation}
\theta:=(\al,\gam)\in\Theta_{\al}\times\Theta_{\gam}=\Theta \subset\mbbr^{p},
\nonumber
\end{equation}
with $\Theta_{\al}\subset\mbbr^{p_{\al}}$ and $\Theta_{\gam}\subset\mbbr^{p_{\gam}}$ being bounded convex domains. The true value is denoted by $\tz=(\al_{0},\gam_{0})\in\Theta$. For brevity, we will also write $\pr$ for $\pr_{\theta_{0}}$ as well.
Let $\overline{\Theta}$ denote the closure of $\Theta$, and write $a\lesssim b$ if $a\le Cb$ for some universal constant $C>0$. 

\begin{assumption}[Regularity of the coefficients]$\ $
\begin{enumerate}
\item The functions $a(\cdot,\al_{0})$ and $c(\cdot,\gamma_{0})$ are globally Lipschitz and of class $\mcc^{2}(\mbbr)$, and $c(x,\gamma)$ is positive for every $(x,\gamma)$;
\item $a(x,\cdot)\in\mcc^{3}(\Theta_{\al})$ and $c(x,\cdot)\in\mcc^{3}(\Theta_{\gam})$ for each $x\in\mbbr$;
\item $\ds{\sup_{\theta\in\overline{\Theta}} \bigg\{
\max_{0\le k\le 3}\max_{0\le l\le 2}\bigg(
\left|\p_{\al}^{k}\p_{x}^{l}a(x,\al)\right| 
+ \left|\p_{\gam}^{k}\p_{x}^{l}c(x,\gam)\right|\bigg) + c^{-1}(x,\gam)\bigg\} \lesssim 1+ |x|^{C}}$.
\end{enumerate}
\label{hm:A1}
\end{assumption}

\begin{assumption}[Identifiability conditions]
The random functions $t\mapsto\left(a(X_{t},\al), c(X_{t},\gam)\right)$ and $t\mapsto\left(a(X_{t},\al_{0}), c(X_{t},\gam_{0})\right)$ 
on $[0,T]$ a.s. coincide if and only if $\theta=\tz$.
\label{hm:A2}
\end{assumption}

The process $X$ is observed only at discrete time $t=nh\ (j=0,\ldots, N)$ where $h=T/N$ with the terminal sampling time $T>0$ being fixed.
Let $X^N:=\{X_{nh};n=0,\ldots, N\}$, the available data set.
The Euler-Maruyama discretization under $\mathbb{P}_{\theta}$ is then given by
\begin{equation}
\Delta^N_nX \approx a(X_{(n-1)h},\alpha)h+c(X_{(n-1)h},\gamma)\Delta^N_n J,
\label{hm:Euler}
\end{equation}
where $\Delta^N_n X:=X_{nh}-X_{(n-1)h}$ and $\Delta^N_nJ:=J_{nh}-J_{(n-1)h}$.
This suggests to consider the following stable quasi-likelihood \cite{masuda16.sqmle} (the multiplicative constant ``$h^{-N/\beta}$'' omitted):
\begin{align}
	\mathbb{L}_N(\theta|X^N)=\prod_{n=1}^N\frac{1}{c(X_{(n-1)h},\gamma)}\phi_\beta\left(\frac{\Delta^N_nX-a(X_{(n-1)h},\alpha)h}{c(X_{(n-1)h},\gamma)h^{1/\beta}}\right). 
\label{hm:sql_def}
\end{align}
We define the stable quasi-maximum likelihood estimator by any element
\begin{equation}
\tes=(\aes,\ges)\in \mathop{\rm argmax}_{\theta\in\overline{\Theta}} \log\mbbl_{N}(\theta|X^{N}).
\nonumber
\end{equation}
Note that $\mathbb{L}_N(\theta|X^N)$ is the true likelihood only when both $a(x,\al)$ and $c(x,\gam)$ are constants.

It follows from \cite{masuda16.sqmle} that under Assumptions \ref{hm:A1} and \ref{hm:A2} the estimator $\tes$ is asymptotically mixed-normally distributed: on a suitably extended probability space $(\tilde{\Omega},\tilde{\mathcal{F}},\tilde{\mathbb{P}})$ carrying a $p$-dimensional standard Gaussian random variable  $\eta$ independent of $\mcf$, we have
\begin{equation}
\left(\sqrt{n}h^{1-1/\beta}(\aes-\al_{0}),\, \sqrt{n}(\ges-\gam_{0})\right) \overset{\mcl_{\mathrm{st}}}\to I(\tz)^{-1/2}\eta,
\label{hm:amn}
\end{equation}
where $\overset{\mcl_{\mathrm{st}}}\to$ stands for the $\mcf$-stable convergence in law (see \cite{JacPro11} for details)
and where $I(\tz)=\diag\left( I_{\al}(\tz),\, I_{\gam}(\gam_{0})\right)$ denotes the a.s. positive definite quasi Fisher-information matrix specified by
\begin{equation}
I_{\al}(\tz)=C_{\al}(\beta)\Sigma_{T,\al}(\theta_{0}), \qquad I_{\gam}(\gam_{0})= C_{\gam}(\beta)\Sigma_{T,\gam}(\gam_{0})
\nonumber
\end{equation}
with
\begin{align}
& C_{\al}(\beta) := \int \bigg(\frac{\partial\phi_{\beta}}{\phi_{\beta}}(y)\bigg)^{2}\phi_{\beta}(y)\dif y, \qquad 
C_{\gam}(\beta):=\int \bigg(1+ y\frac{\partial\phi_{\beta}}{\phi_{\beta}}(y)\bigg)^{2}
\phi_{\beta}(y)\dif y, \nn\\
\nn\\
& \Sigma_{T,\al}(\theta_{0}) :=\frac{1}{T}\int_{0}^{T}\frac{\{\partial_{\al}a(X_{t},\al_{0})\}^{\otimes 2}}{c^{2}(X_{t},\gam_{0})}\dif t, \qquad 
\Sigma_{T,\gam}(\gam_{0}) :=\frac{1}{T}\int_{0}^{T}\frac{\{\partial_{\gam}c(X_{t},\gam_{0})\}^{\otimes 2}}{c^{2}(X_{t},\gam_{0})}\dif t.
\nn
\end{align}
It is known that the stable quasi-maximum likelihood estimator $\hat{\theta}_{N}$ attains the Haj\'{e}k-Le Cam minimax lower bound in some special cases, see \cite{CleGlo15} and \cite{Mas09}.
The convergence \eqref{hm:amn} shows that we do have the conventional Studentization (asymptotic standard normality) result without a finite-variance property as well as any stability condition such as ergodicity.

\subsection{Posterior and Markov chain Monte Carlo methods}\label{sec:mcmc}

\subsubsection{Quasi-posterior distribution}




The quasi-posterior distribution is given by
\begin{align*}
	\Pi_N(\dif\theta|X^N)\propto \mathbb{L}_N(\theta|X^N)\Pi(\dif\theta),
\end{align*}
where $\Pi(\dif\theta)=\pi(\theta)\dif\theta$ denotes the prior distribution of $\theta$. The $\beta$-stable density $\phi_{\beta}$ does not have a closed form for $\beta\in(1,2)$, hence cannot be computed pointwise and neither then can the posterior up-to a constant; this is required for stochastic simulation algorithms such as MCMC. In general, $\phi_{\beta}$ can be only computed via certain numerical integration, the iteration of which may be rather time-consuming \cite{MatTak06}. However, we have access to a non-negative unbiased estimate of the un-normalized posterior, which does suffice to apply (e.g.) MCMC and it may be constructed as follows.
Let $F_\beta(\dif v)=f_\beta(v)\dif v$ denote the positive $\beta/2$-stable distribution with Laplace transform 
$$
\int_0^\infty e^{-vt}F_\beta(\dif v)=e^{-|2t|^{\beta/2}}\quad (t\ge 0). 
$$
Then, the stable distribution $\mcl(J_{1})$ is the law of $\sqrt{V}W$ where $W\sim N(0,1)$ and $V\sim F_\beta$, 
and we have the normal variance-mixture representation:
\begin{align}\label{eq:pb}
	\phi_\beta(x)=\int_0^\infty\frac{1}{\sqrt{2\pi v}}\exp\left(-\frac{x^2}{2v}\right)F_\beta(\dif v). 
\end{align}
If $V_1,\ldots, V_N \sim F_\beta$, 
and $\xi_1,\ldots, \xi_N\sim N(0,1)$, then $\mathcal{L}(h^{1/\beta}\sqrt{V_n}\xi_n)=\mathcal{L}(\Delta^N_nJ)$.
Invoking \eqref{hm:Euler} we may expect that the formal distributional approximation
\begin{align*}
    \Delta^N_nX &\approx a(X_{(n-1)h},\alpha)h+c(X_{(n-1)h},\gamma)h^{1/\beta}\sqrt{V_n}\xi_n
\end{align*}
under $\mathbb{P}_{\theta}$ would be meaningful.

Building on the above observation, we will make use of the ``complete'' quasi-likelihood 
\begin{align}\nn
	\mathbb{L}_N(\theta|X^N,V^N)=\prod_{n=1}^N\frac{1}{c(X_{(n-1)h},\gamma)\sqrt{V_n}}\phi\left(\frac{\Delta^N_nX-a(X_{(n-1)h},\alpha)h}{c(X_{(n-1)h},\gamma)\sqrt{V_n}h^{1/\beta}}\right)f_\beta(V_n),
\end{align}
where $\phi$ denotes the standard normal density. The corresponding posterior distribution is
\begin{align*}
	\Pi_N(\dif\theta,V^N|X^N)\propto \mathbb{L}_N(\theta|X^N,V^N)\Pi(\dif\theta). 
\end{align*}
Note that we can still not compute $\Pi_N(\dif\theta,V^N|X^N)$ up to a constant. Nevertheless, we will still be able to devise a Metropolis-within-Gibbs MCMC algorithm which mitigates this issue.

\subsubsection{MCMC Algorithm}

Let $\theta\in\Theta$. For $n=1,\ldots, N$,  generate $V_n$ from $F_\beta(\dif v|\epsilon_n(\theta))$
where
\begin{align}\label{eq:fb}
	F_\beta(\dif v|x)=
	\left.\frac{1}{\sqrt{2\pi v}}\exp\left(-\frac{x^2}{2v}\right)F_\beta(\dif v)\right/\phi_\beta(x)
\end{align}
and 
\begin{align*}
	\epsilon_n(\theta)= \frac{\Delta^N_nX-a(X_{(n-1)h},\alpha)h}{c(X_{(n-1)h},\gamma)h^{1/\beta}}. 
\end{align*}
Then update $\theta^*\leftarrow \theta+(\mathrm{noise})$
and accept $\theta^*$ with probability 
\begin{align*}
	\alpha(\theta,\theta^*)=\min\left\{1,\frac{\mathbb{L}_N(\theta^*|X^N,V^N)\pi(\theta^*)}{\mathbb{L}_N(\theta|X^N,V^N)\pi(\theta)}\right\}.
\end{align*}
This noise should be properly scaled by the rate matrix
\begin{equation}
D_{N} := \mathrm{diag}\big( \sqrt{N}h^{1-1/\beta}I_{p_{\al}},\, \sqrt{N}I_{p_{\gam}} \big),
\nonumber
\end{equation}
 as specified in \eqref{hm:amn}. 
 The random variable $V_n\sim F_\beta(\dif v|\epsilon_n(\theta))$ can be sampled via rejection sampling for each $n\in\{1,\dots,N\}$:
Generate $v\sim F_\beta(\dif v)$ and accept it with probability
$$
\left.\left\{v^{-1/2}\exp\left(-\frac{x^2}{2v}\right)\right\} \right/ \left\{|x|^{-1/2}\exp\left(-\frac{|x|}{2}\right)\right\}. 
$$
The algorithm is presented in Algorithm \ref{alg:1}
(We write $N_{p}(\mu,\Sigma)$ for the $p$-dimensional Gaussian distribution with mean $\mu$ and covariance matrix $\Sigma$).

\begin{algorithm}
\caption{Metropolis-within-Gibbs algorithm for discretely observed stable L\'{e}vy process}	
\begin{algorithmic}
\State Initialize $\theta_0\in\mathbb{R}^p$, positive definite matrix $\Sigma\in\mathbb{R}^{p\times p}$. 
\For {$m  = 1,\dots, M-1$}
	\For {$n  = 1,\dots, N$}
		\State $V_n\gets F_\beta(\dif v|\epsilon_n(\theta_{m-1}))$
	\EndFor
	\State $W_m\sim N_p(0, \Sigma)$,  $U_m\sim \mathrm{Unif}[0,1]$
	\State $\theta^*_m\gets \theta_{m-1} + D_N^{-1}W_m$
    \If{$U_m\le \alpha(\theta_{m-1},\theta^*_m)$}
    \State $\theta_m\gets\theta_m^*$
    \Else
    \State $\theta_m\gets\theta_{m-1}$
    \EndIf
\EndFor
\end{algorithmic}
\label{alg:1}
\end{algorithm}

A minor modification is to consider a (correlated) pseudo-marginal algorithm \cite{2015arXiv151104992D} to avoid rejection sampling. 
In this case, update $\theta$ as in Algorithm in \ref{alg:1}
together with
$$
V_n^*\leftarrow \rho^{2/\beta} V_n + (1-\rho)^{2/\beta}\xi_n,\quad \xi_n\sim F_\beta,
$$
as in \cite{MR1723510} and \cite{2016arXiv160202889K}. 
Then accept $(\theta^*, V^{N*})$  where $V^{N*}=\{V_n^*\}_{n=1,\ldots N}$ with probability 
\begin{align*}
	\alpha((\theta,V^N),(\theta^*,V^{N*}))=\min\left\{1,\frac{\mathbb{L}_N(\theta^*|X^N,V^{N*})\pi(\theta^*)}{\mathbb{L}_N(\theta|X^N,V^N)\pi(\theta)}\right\}.
\end{align*}
Here, $\rho$ is a tuning parameter and we recommend to take $\rho$ close to $1$. 

\subsection{Bernstein-von Mises theorem}
\label{hm:sec_BvM}

In this section we give a Bernstein-von Mises theorem associated with the stable quasi-likelihood \eqref{hm:sql_def}. Different from the classical version \cite{BKP71}, we will look at the convergence of the posterior distribution of the rescaled parameter centered not at $\tes$, but at $\tz$.

\begin{assumption}
The prior distribution $\Pi$ admits a bounded Lebesgue density $\pi$ which is continuous and positive at $\tz$.
\label{hm:A3}
\end{assumption}

Let 
\begin{equation}
\mbbh_{N}(\theta):=\log\mbbl_{N}(\theta|X^{N})
\nonumber
\end{equation}
be the logarithmic quasi-likelihood function. Following \cite{Yos11}, we introduce the quasi-likelihood ratio random field
\begin{equation}
\mbbz_{N}(u):=\exp\left\{ \mbbh_{N}(\theta_{0}+D_{N}^{-1}u) - \mbbh_{N}(\theta_{0}) \right\}
\nn
\end{equation}
defined on the set $\mbbu_{N}=\mbbu_{N}(\tz):=D_{N}(\Theta-\tz)\subset\mbbr^{p}$. For convenience, we extend the domain of $\mbbz_{N}$ into the whole $\mbbr^{p}$ with $\mbbz_{N}\equiv 0$ on $\mbbr^{p}\setminus\mbbu_{N}$. 
Then, the quasi-posterior distribution $\bar{\Pi}_{N}(du|X^N)$ of the rescaled parameter $u=D_{N}(\theta-\tz)$ admits the density
\begin{equation}
u \mapsto \frac{\mbbz_{N}(u)\pi(\tz+D_{N}^{-1}u)}{\int \mbbz_{N}(v)\pi(\tz+D_{N}^{-1}v)\dif v}.
\nonumber
\end{equation}
Let $\Delta_{N}(\tz):=D_{N}^{-1}\p_{\theta}\mbbh_{N}(\tz)$ and denote by $\cip$ the convergence in probability.
For an $\mcf$-measurable random variable $(\mu,\Sigma)\in\mbbr^{p}\times\mbbr^{p\times p}$, we keep denoting by $N_{p}(\mu,\Sigma)$ the $\mcf$-conditional Gaussian distribution associated with the characteristic function $\xi\mapsto\E[\exp\{i\mu[\xi]-(1/2)\Sigma [\xi,\xi] \}]$.
Let $\|\nu\|_{\mathrm{TV}}=\sup_{|f|\le 1}|\int f(x)\nu(\dif x)|$ denote the total variation norm of a signed measure $\nu$. 

\begin{thm}
Under Assumptions \ref{hm:A1} to \ref{hm:A3}, we have
\begin{align}\label{hm:BvM}
\left\|\bar{\Pi}_{N}(\cdot|X^{N})-N_{p}\left(I(\tz)^{-1}\Delta_{N}(\tz),\, I(\tz)^{-1}\right)\right\|_{\mathrm{TV}} \cip 0.
\end{align}
\label{hm:thm_bvM}
\end{thm}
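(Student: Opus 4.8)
\emph{Strategy.} The plan is to run the Ibragimov--Has'minskii programme in the quasi-likelihood form of \cite{Yos11}, reducing the total-variation statement \eqref{hm:BvM} to an $L^{1}$-convergence of unnormalized densities. Put
\[
g_{N}(u):=\mbbz_{N}(u)\,\pi(\tz+D_{N}^{-1}u),\qquad
\psi_{N}(u):=\exp\Big\{\Delta_{N}(\tz)[u]-\tfrac12 I(\tz)[u,u]\Big\},
\]
so that $\bar{\Pi}_{N}(\cdot\,|X^{N})$ has density $g_{N}/\!\int g_{N}$, while $N_{p}(I(\tz)^{-1}\Delta_{N}(\tz),I(\tz)^{-1})$ has density $\psi_{N}/\!\int\psi_{N}$ with $\int\psi_{N}\,\dif u=(2\pi)^{p/2}(\det I(\tz))^{-1/2}\exp\{\tfrac12 I(\tz)^{-1}[\Delta_{N}(\tz),\Delta_{N}(\tz)]\}<\infty$ a.s., because $I(\tz)$ is a.s.\ positive definite by Assumption~\ref{hm:A2}. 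I will establish
\[
\int_{\mbbr^{p}}\big|g_{N}(u)-\pi(\tz)\psi_{N}(u)\big|\,\dif u\ \cip\ 0 .
\]
Since this also forces $\int g_{N}\cip\pi(\tz)\int\psi_{N}>0$ a.s., a standard manipulation of ratios (as in the quasi-likelihood analysis framework) then upgrades it to $\|g_{N}/\!\int g_{N}-\psi_{N}/\!\int\psi_{N}\|_{\mathrm{TV}}\cip 0$, which is \eqref{hm:BvM}.

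\emph{Local quadratic expansion.} The first ingredient is the locally asymptotically mixed-normal expansion of the quasi-likelihood ratio field: under Assumptions~\ref{hm:A1}--\ref{hm:A2},
\[
\log\mbbz_{N}(u)=\Delta_{N}(\tz)[u]-\tfrac12 I(\tz)[u,u]+r_{N}(u),\qquad
\sup_{|u|\le R}|r_{N}(u)|\cip 0\ \ (\forall R>0),
\]
which is the local content of the analysis of \cite{masuda16.sqmle,masuda13} underlying \eqref{hm:amn}: a second-order Taylor expansion of $\mbbh_{N}$ at $\tz$ matches the linear term with $\Delta_{N}(\tz)[u]$, and, since the normalized observed information satisfies $-D_{N}^{-1}\p_{\theta}^{2}\mbbh_{N}(\tz+D_{N}^{-1}u)D_{N}^{-1}\cip I(\tz)$ uniformly on compacts, matches the quadratic term with $-\tfrac12 I(\tz)[u,u]$; here $\Delta_{N}(\tz)\overset{\mcl_{\mathrm{st}}}\to I(\tz)^{1/2}\eta$, hence is tight. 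Because $D_{N}^{-1}\to 0$ and $\pi$ is bounded, continuous and positive at $\tz$ (Assumption~\ref{hm:A3}), $\pi(\tz+D_{N}^{-1}u)\to\pi(\tz)$ uniformly on compacts; combining, $\sup_{|u|\le R}|g_{N}(u)-\pi(\tz)\psi_{N}(u)|\cip 0$ and therefore $\int_{|u|\le R}|g_{N}-\pi(\tz)\psi_{N}|\,\dif u\cip 0$ for every $R>0$.

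\emph{Polynomial-type large deviations.} The decisive step is a polynomial-type large deviation inequality (PLDI) in the sense of \cite{Yos11}: for every $L\ge 1$ there is $C_{L}>0$, not depending on $N$, with
\[
\E\Big[\Big(\sup_{u\in\mbbu_{N},\,|u|\ge r}\mbbz_{N}(u)\Big)^{1/2}\Big]\le\frac{C_{L}}{r^{L}}\qquad(r>0,\ N\ge 1).
\]
I would derive this by checking Yoshida's structural hypotheses for the random field $u\mapsto\mbbh_{N}(\tz+D_{N}^{-1}u)$: the moment bounds on $a$, $c$ and their $x$- and $\theta$-derivatives in Assumption~\ref{hm:A1}(3), combined with the normal variance--mixture representation \eqref{eq:pb}, the identity $\mcl(\Delta^{N}_{n}J)=\mcl(h^{1/\beta}\sqrt{V_{n}}\xi_{n})$, and standard $L^{q}$-estimates for $\beta$-stable increments, yield uniform-in-$N$ moment bounds for the normalized score $\Delta_{N}(\tz)$ and for $D_{N}^{-1}\p_{\theta}\mbbh_{N}(\theta)$, $D_{N}^{-1}\p_{\theta}^{2}\mbbh_{N}(\theta)D_{N}^{-1}$ over $\theta\in\overline{\Theta}$; the non-degeneracy of the statistical part is $I(\tz)>0$ a.s., and for the global regime (large $|u|$, i.e.\ $\theta$ away from $\tz$) one invokes the identifiability Assumption~\ref{hm:A2} to keep the normalized quasi-log-likelihood ratio asymptotically bounded away from $0$ from below. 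Standard consequences are that $\int_{|u|\ge r}\mbbz_{N}(u)\,\dif u\to 0$ in probability as $r\to\infty$ uniformly in $N$, while $\int_{|u|\le 1}\mbbz_{N}(u)\,\dif u$ (hence $\int g_{N}$) stays bounded away from $0$ in probability uniformly in $N$, by the local expansion together with the tightness of $\Delta_{N}(\tz)$; consequently $\bar{\Pi}_{N}(\{|u|\ge r\}\,|X^{N})\to 0$ in the same sense.

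\emph{Assembly and the main difficulty.} Given $\ep>0$, the tail control above together with the Gaussian tail bound $\int_{|u|\ge r}\psi_{N}(u)\,\dif u\to 0$ (valid since $I(\tz)>0$ a.s.\ and $\Delta_{N}(\tz)$ is tight) allow me to choose $R$ so that both $\int_{|u|\ge R}g_{N}\,\dif u$ and $\pi(\tz)\int_{|u|\ge R}\psi_{N}\,\dif u$ are below $\ep$ with probability larger than $1-\ep$ for all large $N$, while on $\{|u|\le R\}$ the local expansion gives $\int_{|u|\le R}|g_{N}-\pi(\tz)\psi_{N}|\,\dif u\cip 0$; summing yields $\int_{\mbbr^{p}}|g_{N}-\pi(\tz)\psi_{N}|\,\dif u\cip 0$ and completes the proof as in the reduction step. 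I expect the PLDI to be the genuine obstacle: with $T$ fixed and no ergodicity the limiting information $I(\tz)$ is random and only finite-horizon, path-dependent concentration is available, so obtaining uniform-in-$N$ polynomial tail bounds for $\mbbz_{N}$ requires careful moment estimates for stable increments under Assumption~\ref{hm:A1} and a quantitative use of the identifiability Assumption~\ref{hm:A2}; by comparison the LAMN expansion and the final assembly are routine.
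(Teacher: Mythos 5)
Your overall architecture --- reduce \eqref{hm:BvM} to $\int|\mbbz_{N}(u)\pi(\tz+D_{N}^{-1}u)-\pi(\tz)\mbbz_{N}^{0}(u)|\dif u\cip 0$, prove it by a local quadratic expansion plus a tail bound, then assemble --- is exactly the paper's. The genuine gap is the tail control. You delegate it entirely to a polynomial-type large deviation inequality in expectation, $\E[(\sup_{|u|\ge r}\mbbz_{N}(u))^{1/2}]\le C_{L}r^{-L}$ for every $L$, and only sketch that you ``would check Yoshida's structural hypotheses.'' Under the paper's assumptions this lemma is not just unproven but very likely unprovable: Yoshida's non-ergodic PLDI requires quantitative small-ball estimates of the form $\pr(\chi_{0}<r^{-\rho})\le C_{L}r^{-L}$ for the random key index $\chi_{0}$ (and likewise for $\lam_{\min}(I(\tz))$), whereas Assumptions \ref{hm:A1}--\ref{hm:A2} only give $\chi_{0}>0$ and $I(\tz)>0$ almost surely. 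Moreover, with a $\beta$-stable driver and $\beta<2$ the state process has $\E[\sup_{t\le T}|X_{t}|^{q}]=\infty$ for $q\ge\beta$, so the uniform-in-$N$ moment bounds on $D_{N}^{-1}\p_{\theta}\mbbh_{N}$, $D_{N}^{-1}\p_{\theta}^{2}\mbbh_{N}D_{N}^{-1}$ over $\overline{\Theta}$ that your PLDI needs fail on the original probability space; the paper's localization step only licenses such bounds for in-probability conclusions, not for expectation bounds uniform in $N$.

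The paper circumvents precisely this obstacle. It fixes a good event $G_{N}(M,\lam)$ on which $\chi_{0}\ge 4\lam$, $\lam_{\min}(I(\tz))\ge 4\lam$, the normalized score and observed information are controlled, and a uniform law of large numbers with a \emph{polynomial rate in probability}, $(\sqrt{N})^{q}\sup_{\theta}|\mbby_{1,N}(\theta)-\mbby_{1}(\gam)|\vee(\sqrt{N}h^{1-1/\beta})^{q}\sup_{\al}|\mbby_{2,N}(\al)-\mbby_{2}(\al)|\cip 0$, holds; $\pr(G_{N}^{c})$ is made arbitrarily small. It then splits at the \emph{growing} radius $A_{N}=\{|D_{N}^{-1}u|\le\ep_{N}\}$ with $\ep_{N}=N^{-c}$: on $A_{N}$ a third-order Taylor expansion gives the envelope $e^{M|u|-\lam|u|^{2}}$ and dominated convergence; on $A_{N}^{c}$ the identifiability bound $\sup_{|\gam-\gam_{0}|\ge\ep_{N}/2}\mbby_{1}(\gam)\le-\chi_{0}\ep_{N}^{2}/4$ plus the rated ULLN yields $\log\mbbz_{1,N}\lesssim-N^{1-2c}$, and the boundedness of $\Theta$ makes the leftover volume only polynomial in $N$. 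Your fixed-radius split $\{|u|\le R\}\cup\{|u|\ge R\}$ leaves the entire region $R\le|u|\lesssim\ep_{N}\sqrt{N}h^{1-1/\beta}$ to the PLDI, which is where the argument breaks. To repair your proof you would have to either (i) replace the expectation-based PLDI by an in-probability tail bound on a good event and split at a growing radius --- at which point you have reproduced the paper's proof --- or (ii) add hypotheses (moment conditions on $X$, small-ball estimates for $I(\tz)$ and $\chi_{0}$) that the theorem does not assume.
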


The proof of Theorem \ref{hm:thm_bvM} is given in Appendix \ref{app:bvm}.


\section{Large sample properties of the Markov chain Monte Carlo method}\label{sec:theory}

In this section, we will first introduce a general framework for developing some stability properties of MCMC algorithms, and then apply it to the  MCMC algorithms proposed in Section \ref{sec:mcmc}.
In particular, we show how to scale the proposals when the frequency of the data grows.

\subsection{Local consistency property}

For a moment we step away from the main context. Notations and terminologies in this section generally follow those of \cite{IH}.

\subsubsection{Some general notions}

Let $E$ be a normed space equipped with the Borel $\sigma$-algebra $\mathcal{E}$. 
For a probability measures $\mu$ and $\nu$ on $(E,\mathcal{E})$, let 
$$\|\mu-\nu\|_{\mathrm{BL}}=\sup_{f\in \mathrm{BL}_1}\left|\int f(x)\mu(\dif x)-\int f(x)\nu(\dif x)\right|$$
 be the bounded Lipschitz distance between $\mu$ and $\nu$ where 
$\mathrm{BL}_1$ denotes the set of any $\mathcal{E}$-measurable real-valued function $f$ such that 
$\sup_{x\in E}|f(x)|\le 1$ and $|f(x)-f(y)|\le \|x-y\|\ (x, y\in E)$. 

Let 
$\mathscr{E}_N=\{\mathscr{X}^{(N)},\mathfrak{A}^{(N)},\mathbf{P}_\theta^{(N)},\theta\in\Theta\}$  be a family of statistical experiments and $x^N$ be the corresponding observation. Here, the parameter space 
$\Theta\subset\mathbb{R}^p$ be an open set equipped with the Borel $\sigma$-algebra $\Xi$. 
Let $\mathbb{L}_N(\theta|x^N)$ be the $\mathscr{X}^{(N)}\otimes\Xi$-measurable quasi-likelihood function which approximates the true likelihood $\dif \mathbf{P}_\theta^{(N)}(x^N)$. 
Let $\Pi(\dif \theta)$ be the prior distribution for the experiments, and $\Pi_N(\dif\theta|x^N)$ be the quasi-posterior distribution defined by 
\begin{align*}
	\Pi_N(\dif\theta|x^N)=\frac{\mathbb{L}_N(\theta|x^N)\Pi(\dif\theta)}{\int_\Theta\mathbb{L}_N(\theta|x^N)\Pi(\dif\theta)}. 
\end{align*}

We shall refer to $\mathscr{M}_N$ the Markov chain Monte Carlo method for the experiment $\mathscr{E}_N$
if the algorithm generates a Markov chain $\{\theta^N_m;m=0,1,\dots\}$ for each $x^N$.
We shall refer to $F^N_M$ the empirical distribution of  $\{\theta^N_m;0\le m\le M-1\}$, that is, 
\begin{align*}
	F^N_M(A)=\frac{1}{M}\sum_{m=0}^{M-1}1_A(\theta_m^N)
\end{align*}
for each Borel set $A$ of $\mathbb{R}^p$. Let $D_N$ be a  non-degenerate $p\times p$-matrix for each $N\in\mathbb{N}$. 
In practice, we expect that Markov chain Monte Carlo methods are robust and does not collapse as $N\rightarrow\infty$. 
We formalize this favourable property as follows. 

\begin{definition}
	A family $\mathscr{M}_N$ is called locally consistent if  as $N, M\rightarrow\infty$ 
	\begin{align*}
		\|\Pi_N(\theta_0 + D_N^{-1} \dif u|x^N)-F^N_M(\theta_0 + D_N^{-1} \dif u)\|_{\mathrm{BL}}\rightarrow 0
	\end{align*}
	in $\mathbf{P}_{\theta_0}^{(N)}$-probability. 
\end{definition} 

In the next subsection, we introduce a sufficient condition for this property under the general framework. 

\subsubsection{Sufficient conditions}

Let $Q_N(\theta,\dif\vartheta|x^N)\ (n\in\mathbb{N})$ be a probability transition kernel 
from $\mathscr{X}^{(N)}\times\Theta$ to $\Theta$, 
and let $\Pi_N(\dif\theta|x^N)$ be a probability transition kernel from $\mathscr{X}^{(N)}$ to $\Theta$. 
Let $A_N:\mathscr{X}^{(N)}\times\Theta^2\rightarrow[0,1]$ be a measurable map. Let 
\begin{align*}
	P_N(\theta,\dif \vartheta|x^N)&=Q_N(\theta,\dif \vartheta|x^N)A_N(\theta,\vartheta|x^N)+R_N(\theta|x^N)\delta_\theta(\dif \vartheta),
\end{align*}
where 
\begin{align*}
	R_N(\theta|x^N)=1-\int_\Theta Q_N(\theta,\dif \vartheta|x^N)A_N(\theta,\vartheta|x^N). 
\end{align*}
As probability measures on $\Theta^2$, we assume that for each $x^N\in \mathscr{X}^{(N)}$, 
\begin{align*}
	\Pi_N(\dif\theta|x^N)Q_N(\theta,\dif \vartheta|x^N)A_N(\theta,\vartheta|x^N)
	=
	\Pi_N(\dif\vartheta|x^N)Q_N(\vartheta,\dif \theta|x^N)A_N(\vartheta,\theta|x^N). 
\end{align*}
Then for each $x^N$, the probability transition kernel $P_N(\theta,\dif\vartheta|x^N)$ is $\Pi_N(\dif\theta|x^N)$-reversible, that is, 
\begin{align*}
	\Pi_N(\dif\theta|x^N)P_N(\theta,\dif\vartheta|x^N)=\Pi_N(\dif\vartheta|x^N)P_N(\vartheta,\dif\theta|x^N)
\end{align*}
as probability measures on $\Theta^2$. 
Let $\mathscr{M}_N$ be the Markov chain Monte Carlo method associated with the transition kernel $P_N$, 
that is, for each $x^N$, the algorithm generates a Markov chain  $\{\theta_m^N\}_m$  with respect to the probability transition kernel $P_N(\theta,\dif\vartheta|x^N)$. 
For simplicity, the initial point is generated from the quasi-posterior distribution, that is, 
$\theta_0^N\sim \Pi_N(\dif\theta|x^N)$.

Let $q\in\mathbb{N}$. Let $\Pi(\dif u|s)=\pi(u|s)\dif u$ be a probability transition kernel from an open set $\mathcal{S}\subset \mathbb{R}^q$ to $\mathbb{R}^p$
and let $Q(u, \dif v|s)=q(u,v|s)\dif v$ be a probability transition kernel from $\mathcal{S}\times\mathbb{R}^p$ to $\mathbb{R}^p$. 
Let $A(u,v|s)$ be a $[0,1]$-valued function.
For each $s\in\mathcal{S}$ we define a probability transition kernel
\begin{align}
P(u,\dif v|s)&=Q(u,\dif v|s)A(u,v|s)+R(u|s)\delta_u(\dif v), \nn\\
R(u|s)&=1-\int Q(u,\dif v|s)A(u,v|s). 
\nonumber
\end{align}

\begin{assumption}[Regularity of the limit kernel]
For each $u,v\in\mathbb{R}^p$, $\pi(u|s), q(u,v|s)$ and $A(u,v|s)$ are continuous in $s$, and 
\begin{align*}
	\sup_{s\in K}\sup_{u,v\in\mathbb{R}^p}\pi(u|s)+q(u,v|s)<\infty
\end{align*}
for any compact set $K$ of $\mathcal{S}$. 
\label{as:kernel}
\end{assumption}

Let 
$s_N:\mathscr{X}^{(N)}\rightarrow \mathcal{S}$ be a sequence of random variables. The following proposition illustrates a sufficient condition for 
local consistency. See Appendix \ref{app:lc} for the proof. 
In the following, we call that a transition probability kernel $P$ is ergodic if it is irreducible and positive recurrent. 

\begin{proposition}\label{prop:mtk}
Suppose that $P(u,\dif v|s)$ is ergodic with invariant probability measure $\Pi(\dif u|s)$ for each $s\in\mathcal{S}$. 
Suppose that $s_N$ is $\mathbf{P}_{\theta_0}^{(N)}$-tight for each $\theta_0\in\Theta$. 
	Then, under Assumption \ref{as:kernel}, the family $\mathscr{M}_N$ is locally consistent if $A_N(\theta_0 + D_N^{-1} u,\theta_0 + D_N^{-1} v|x^N)- A(u,v|s_N)\rightarrow 0$ in probability for each $u, v\in\mathbb{R}^p$ and 
\begin{align}\label{BvMMH}
	\|\Pi_N(\theta_0 + D_N^{-1}\dif u|x^N)Q_N(\theta_0 + D_N^{-1} u,\theta_0 + D_N^{-1}\dif v|x^N)-\Pi(\dif u|s_N)Q(u,\dif v|s_N)\|_{\mathrm{TV}}
\end{align}
converges in $\mathbf{P}_{\theta_0}^{(N)}$-probability to $0$. 
\end{proposition}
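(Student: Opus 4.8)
The plan is to compare the $N$-th MCMC chain, rescaled through $\theta = \theta_0 + D_N^{-1}u$, with the limiting chain governed by $P(u,\dif v|s_N)$, and then invoke ergodicity of the latter together with the bounded Lipschitz continuity of the empirical-distribution map. First I would introduce the rescaled objects: write $\bar\Pi_N(\dif u|x^N) := \Pi_N(\theta_0 + D_N^{-1}\dif u|x^N)$, $\bar Q_N(u,\dif v|x^N) := Q_N(\theta_0 + D_N^{-1}u,\theta_0 + D_N^{-1}\dif v|x^N)$, $\bar A_N(u,v|x^N) := A_N(\theta_0 + D_N^{-1}u,\theta_0 + D_N^{-1}v|x^N)$, and let $\bar P_N$ be the reversible kernel assembled from these in the same way $P$ is assembled from $Q, A$. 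The key structural observation is that reversibility is preserved under the affine reparametrization, so $\bar P_N(u,\dif v|x^N)$ is $\bar\Pi_N(\dif u|x^N)$-reversible; and the rescaled chain $\{u_m^N\} := \{D_N(\theta_m^N - \theta_0)\}$ is exactly the chain driven by $\bar P_N$ started from $\bar\Pi_N$. Its empirical distribution $\bar F_M^N$ equals $F_M^N(\theta_0 + D_N^{-1}\dif u)$, so local consistency is precisely the statement $\|\bar\Pi_N(\dif u|x^N) - \bar F_M^N(\dif u)\|_{\mathrm{BL}} \to 0$ in probability.

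The second step is a coupling argument conditional on $x^N$. Because the hypothesis \eqref{BvMMH} gives $\|\bar\Pi_N(\dif u|x^N)\bar Q_N(u,\dif v|x^N) - \Pi(\dif u|s_N)Q(u,\dif v|s_N)\|_{\mathrm{TV}} \cip 0$, and because $\bar A_N(u,v|x^N) - A(u,v|s_N) \cip 0$ pointwise with both quantities bounded by $1$, one can build — for each realization of $x^N$ with $s_N$ in a fixed compact set $K$ — a maximal coupling between one step of $\bar P_N(u,\cdot|x^N)$ and one step of $P(u,\cdot|s_N)$, controlling the coupling probability by the total-variation distance between the joint proposal laws plus a term coming from the discrepancy of the acceptance functions (integrated against $\Pi(\dif u|s_N)Q(u,\dif v|s_N)$, which is where Assumption \ref{as:kernel} furnishes the uniform bound needed for a dominated-convergence passage). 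Iterating this one-step coupling over $m = 0, \dots, M-1$ and using that both chains are started from distributions that are TV-close (again \eqref{BvMMH} applied at the level of the stationary laws $\bar\Pi_N$ vs. $\Pi(\cdot|s_N)$), one obtains that the full path laws of $\{u_m^N\}_{m<M}$ and of the limiting chain $\{u_m\}_{m<M}$ under $P(\cdot|s_N)$ are close in total variation, at least after conditioning on the tightness event $\{s_N \in K\}$; since $s_N$ is tight, $K$ can be chosen to absorb all but $\epsilon$ of the mass.

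The third step is the ergodic-theorem input. For fixed $s \in \mathcal{S}$, $P(\cdot|s)$ is ergodic with invariant law $\Pi(\cdot|s)$, so by the Markov chain ergodic theorem its empirical distribution converges weakly to $\Pi(\cdot|s)$ a.s. as $M \to \infty$, hence $\|\frac1M\sum_{m<M}\delta_{u_m} - \Pi(\cdot|s)\|_{\mathrm{BL}} \to 0$; I would promote this to a statement uniform in $s \in K$ using the continuity-in-$s$ of the kernel (Assumption \ref{as:kernel}) together with a compactness/equicontinuity argument, or more cheaply extract it along the random $s_N$ by a subsequence argument. Combining: $\|\bar F_M^N - \Pi(\cdot|s_N)\|_{\mathrm{BL}}$ is small (coupling transfers the ergodic conclusion to $\bar F_M^N$), and $\|\bar\Pi_N(\cdot|x^N) - \Pi(\cdot|s_N)\|_{\mathrm{BL}} \le \|\bar\Pi_N(\cdot|x^N) - \Pi(\cdot|s_N)\|_{\mathrm{TV}}$ is small by \eqref{BvMMH} (marginalizing the joint-law statement), and the triangle inequality closes the argument with $M, N \to \infty$ jointly.

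The main obstacle I anticipate is the interchange of the two limits $M \to \infty$ and $N \to \infty$: the ergodic convergence of the empirical measure to stationarity holds for each fixed $s$ but its \emph{rate} is not uniform, and $s_N$ is only tight, not convergent, so one cannot naively freeze $s$. The coupling bound degrades with $M$ (a single-step coupling error $\delta_N$ accumulates to roughly $M\delta_N$ over $M$ steps unless one exploits geometric contraction, which is not assumed here), so one must be careful to send $N \to \infty$ first \emph{for fixed $M$}, obtaining that $\bar F_M^N$ is asymptotically distributed like the empirical measure of the limiting $M$-step chain at parameter $s_N$, and only then let $M \to \infty$ — mirroring the ``$N, M \to \infty$'' phrasing in the definition of local consistency, which should be read as iterated limits. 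Making this precise — i.e., choosing the order of limits and the compact $K = K(\epsilon)$ so that all error terms (coupling discrepancy, acceptance-function discrepancy, stationary-law discrepancy, ergodic-averaging error, tightness leftover) are simultaneously $\le \epsilon$ — is the delicate bookkeeping at the heart of the proof.
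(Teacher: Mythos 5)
Your first step (rescaling, observing that reversibility survives the affine reparametrization, and reducing local consistency to comparing the stationary ``initial plus one step'' law $\bar\Pi_N\bar Q_N\bar A_N+\bar\Pi_N\bar R_N\delta_u$ with $\Pi(\cdot|s_N)P(\cdot|s_N)$) is exactly the reduction the paper performs, and your handling of the acceptance-function discrepancy (restrict to a compact $K_q$ for $s_N$ by tightness, use the uniform density bound from Assumption \ref{as:kernel} to pass the pointwise convergence $\bar A_N-A\to0$ through the integral by dominated convergence) reproduces the paper's estimate of $\delta_{N,1}''$. The paper also makes the subsequence reduction you mention only in passing --- without loss of generality $s_N\to s$ in probability --- and then controls $\|\mu(\cdot|s_N)-\mu(\cdot|s)\|_{\mathrm{TV}}$ by Scheff\'e, so that everything is compared against a single ergodic kernel $P(\cdot|s)$; this is cleaner than trying to make the ergodic theorem uniform over $s\in K$.

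The genuine gap is in your second and third steps, i.e.\ in how you pass from one-step total-variation closeness to closeness of the empirical distribution. The paper does not do this by path coupling: it invokes Lemmas 2 and 3 of \cite{Kamatani10}, which state that $\|\bar\mu_N-\mu(\cdot|s)\|_{\mathrm{TV}}=o_p(1)$, together with ergodicity of the limit kernel and stationarity of the chains, already implies local consistency in the joint-limit sense (for \emph{any} $M_N\to\infty$); the argument there rests on stationarity and reversibility (control of the variance of the empirical average via the two-dimensional marginals), not on coupling sample paths. Your maximal-coupling construction accumulates a one-step error $\delta_N$ linearly over $M$ steps, so the resulting bound $M\delta_N$ is vacuous precisely in the regime where $M_N$ grows faster than $\delta_N^{-1}$, and no geometric contraction is available to prevent this. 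You recognize the problem but resolve it by reinterpreting ``$N,M\to\infty$'' as an iterated limit, which proves a strictly weaker statement than the one the proposition (via the cited lemmas) delivers and than the one the definition of local consistency is meant to capture --- the whole point of the local-consistency framework is to certify that the algorithm does not require $M_N$ to grow at some rate tied to $N$. To close the gap you would either need to import the stationarity-based argument of \cite{Kamatani10} in place of the coupling, or add a contraction hypothesis that is not assumed here.
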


\subsection{Main result}

We now go back to the main context given in Section \ref{sec:setup}.
In Algorithm \ref{alg:1}, the proposal distribution for observation $X^N$ is 
\begin{equation}
Q_N(\theta_0 + D_N^{-1} u,\theta_0 + D_N^{-1} \dif v|X^N)=Q(u,\dif v|s_N)=N_p(0,\Sigma)
\nonumber
\end{equation}
for some non-degenerate matrix $\Sigma\in\mathbb{R}^{p\times p}$. 
We will set $s_N=(\Delta_{N}(\theta_0), I(\theta_0), I^*(\theta_0))$ where $I^*(\tz)$ is defined in Section \ref{hm:sec_conv-ar}.
By this setting of the proposal distribution, the following convergence is equivalent to  the condition (\ref{BvMMH}):  
\begin{align*}
	\|\Pi_N(\theta_0 + D_N^{-1}\dif u|X^N)-\Pi(\dif u|s_N)\|_{\mathrm{TV}}=o_p(1). 
\end{align*}
The acceptance probability of  Algorithm \ref{alg:1} is 
\begin{align*}
		A_N(\theta,\vartheta|X^N)=\mathbb{E}\left[\left.\min\left\{1,\frac{\mathbb{L}(\vartheta|X^N,V^N)\pi(\vartheta)}{\mathbb{L}_N(\theta|X^N,V^N)\pi(\theta)}\right\}\right|X^N,\theta\right]. 
\end{align*}
Let $s=(\Delta, I, I^*)\in \mathbb{R}^p\times \mathbb{R}^{p\times p}\times\mathbb{R}^{p\times p}$. 
We assume that $I$ and $I^*$ are $p\times p$-symmetric matrices. 
In the following theorem, we will show that the scaled version of the acceptance ratio converges to
\begin{align*}
		A(u,u^*|\Delta,I,I^*)=\mathbb{E}[\min\left\{1,\eta\right\}]
\end{align*}
where 
\begin{align*}
	\eta=\Delta[u^*-u]+W[u^*-u]-I[(u^*)^{\otimes 2}-u^{\otimes 2}]-\frac{1}{2}I^*[(u^*-u)^{\otimes 2}]
\end{align*}
with $W\sim N(0, I^*)$. 
By showing this, we prove that Algorithm \ref{alg:1} generates a locally consistent family of Markov chain Monte Carlo methods. 
In particular, the algorithm will not collapse in the large data limit.

\begin{theorem}[Metropolis-within-Gibbs algorithm for the jump process]\label{prop:mwg}
Under Assumptions \ref{hm:A1} to \ref{hm:A3}, 
	\begin{align*}
		A_N(\theta_0+D_N^{-1}u,\theta_0+D_N^{-1} v|X^N)-A(u,v|\Delta_{N}(\theta_0), I(\theta_0), I^*(\theta_0))\rightarrow 0
	\end{align*}
	in probability for each $u, v\in\mathbb{R}^p$. 
	In particular, the family of Markov chain Monte Carlo methods $\mathscr{M}_N$ defined by Algorithm \ref{alg:1} is locally consistent. 
\end{theorem}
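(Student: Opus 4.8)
The plan is to reduce the claim to a local expansion of the complete-data quasi-log-likelihood ratio around $\theta_0$, combined with a law-of-large-numbers statement for the latent variables $V^N$, and then to invoke Proposition \ref{prop:mtk}. Fix $u,v\in\mathbb{R}^p$ and write $\theta=\theta_0+D_N^{-1}u$, $\vartheta=\theta_0+D_N^{-1}v$. The acceptance probability is $A_N(\theta,\vartheta|X^N)=\E[\min\{1,\exp(\Lambda_N)\}\mid X^N,\theta]$ where $\Lambda_N=\log\mathbb{L}_N(\vartheta|X^N,V^N)+\log\pi(\vartheta)-\log\mathbb{L}_N(\theta|X^N,V^N)-\log\pi(\theta)$ and, crucially, the inner $V^N$ is drawn componentwise from the conditional law $F_\beta(\dif v|\epsilon_n(\theta))$ tied to the \emph{current} point $\theta$ (not $\vartheta$). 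First I would expand $\Lambda_N$ using Assumption \ref{hm:A1}: Taylor-expand $a(X_{(n-1)h},\cdot)$ and $c(X_{(n-1)h},\cdot)$ in the parameter, note that $D_N$ is chosen exactly so the first-order term is $O(1)$, and collect the expansion into a score-type linear term, a deterministic quadratic term governed by the information, and a remainder that is $o_p(1)$ uniformly via Sobolev/moment bounds (the polynomial-growth control in Assumption \ref{hm:A1}(3) together with finiteness of the relevant moments of the $\beta$-stable increments since $\beta\ge 1$ is handled as in \cite{masuda16.sqmle}).

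The key conceptual point is the \emph{mismatch} between the sampling law of $V^N$ and the two likelihoods appearing in the ratio. Because $V_n\sim F_\beta(\dif v|\epsilon_n(\theta))$, the quantity $\sum_n\{\log$-terms evaluated at $\theta\}$ interacts with $V^N$ so that, conditionally on $X^N$ and $\theta$, the standardized residuals $\xi_n:=\epsilon_n(\theta)/\sqrt{V_n}$ behave like a triangular array whose empirical moments converge; I would show that the $V_n$-dependent part of the linear term in $\Lambda_N$ contributes, in the limit, the extra Gaussian term $W[v-u]$ with $W\sim N(0,I^*(\theta_0))$, while the $V_n$-dependent part of the quadratic term contributes $-\tfrac12 I^*(\theta_0)[(v-u)^{\otimes 2}]$; the remaining, $V_n$-free pieces give $\Delta_N(\theta_0)[v-u]-I(\theta_0)[v^{\otimes2}-u^{\otimes2}]$. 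The precise identification of $I^*(\theta_0)$ — defined via the conditional variance structure of the rejection-sampling weights, as announced for Section \ref{hm:sec_conv-ar} — is exactly what makes the limit $\eta$ come out as stated. Here one uses a conditional CLT / LLN for the array $\{g(\xi_n,X_{(n-1)h})\}$ along the lines of the ergodic-type averages $\Sigma_{T,\al},\Sigma_{T,\gam}$ already appearing in \eqref{hm:amn}, noting that stable convergence lets us treat $X^N$-measurable coefficients as frozen.

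Having established $\Lambda_N\overset{d}\to\eta$ jointly with the $X^N$-conditioning made explicit, I would pass the convergence through $x\mapsto\min\{1,e^x\}$, which is bounded and continuous, to get $A_N(\theta,\vartheta|X^N)\to\E[\min\{1,e^\eta\}]=A(u,v|\Delta_N(\theta_0),I(\theta_0),I^*(\theta_0))$ in probability (bounded convergence upgrades the conditional-expectation convergence). Finally, to conclude local consistency via Proposition \ref{prop:mtk}, I check its hypotheses: the proposal kernel is the fixed Gaussian $N_p(0,\Sigma)$, so \eqref{BvMMH} reduces to $\|\bar\Pi_N(\cdot|X^N)-\Pi(\dif u|s_N)\|_{\mathrm{TV}}=o_p(1)$, which follows from the Bernstein–von Mises Theorem \ref{hm:thm_bvM} with $\Pi(\dif u|s)=N_p(I^{-1}\Delta,I^{-1})$; Assumption \ref{as:kernel} holds because the Gaussian densities and the Metropolis ratio $A(u,v|s)$ are continuous in $s$ and the densities are uniformly bounded on compacta; $s_N=(\Delta_N(\theta_0),I(\theta_0),I^*(\theta_0))$ is tight because $\Delta_N(\theta_0)$ converges $\mcf$-stably (hence is tight) and $I(\theta_0),I^*(\theta_0)$ are a.s.\ finite; and ergodicity of the limiting random-walk Metropolis kernel $P(u,\dif v|s)$ on $\mathbb{R}^p$ with Gaussian proposal and Gaussian target is standard.

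The main obstacle I anticipate is the second step: controlling the $V^N$-expectation in $A_N$ carefully enough to extract the correct limit $\eta$, in particular proving the joint weak convergence of the linear and quadratic $V_n$-dependent fluctuations to $(W[v-u],\,I^*[(v-u)^{\otimes2}])$ with the stated covariance $I^*(\theta_0)$, and doing so \emph{uniformly in the $X^N$-conditioning} so that the convergence is genuinely in $\mathbf{P}_{\theta_0}$-probability. This requires a conditional triangular-array limit theorem with the stable-increment moment bookkeeping (valid for $1\le\beta<2$) and a uniform negligibility estimate for the Taylor remainders, both of which are technically the heaviest part of the argument and where the detailed computation would be deferred to the appendix.
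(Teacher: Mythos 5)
Your proposal is correct and follows essentially the same route as the paper: a Taylor expansion of the augmented (complete-data) quasi-log-likelihood ratio, a decomposition of its score into the observed-data score $\Delta_N(\theta_0)$ plus a conditionally independent, mean-zero fluctuation whose conditional CLT yields $W\sim N(0,I^*(\theta_0))$, passage through the bounded Lipschitz function $\min\{1,e^x\}$, and verification of the hypotheses of Proposition \ref{prop:mtk} via the Bernstein--von Mises theorem. The only cosmetic difference is bookkeeping: in the paper all of the randomness sits in the linear (score) term $\Delta_N^\dagger=\Delta_N+\Delta_N^*$, while the quadratic term converges deterministically to $-\tfrac12 I^\dagger[(v-u)^{\otimes 2}]$ and the final form of $\eta$ emerges from the algebraic identity relating $I$, $I^\dagger=I+I^*$ after recentering $\Delta_N(\theta_N)$ at $\theta_0$ — this does not affect the validity of your argument.
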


The proof of the theorem is given in Section \ref{sec:proof_mwg}.

\section{Simulations}\label{sec:simos}

\subsection{Simulated Example}

We generated seven data sets from the time discretized models with
$a(x,\alpha)=\alpha_1(x-\alpha_2)$, $c(x,\gamma)=\exp(\gamma \cos(x))$,
$\beta=1.5$, $h_N=1/N$ and $N\in\{10,50,100,250,500,1000,2000\}$.
$\alpha_1, \alpha_2, \gamma$ are independently standard normal variables in the prior.
Algorithm \ref{alg:1} was run
for $10,000$ iterations. All simulations are repeated $100$ times.
The results are given in Figure \ref{fig:accrate}.

Figure \ref{fig:accrate} illustrates the average acceptance rate against the number of data.
As can be seen the average acceptance rate is stable from small data to large data.
In particular, the data increases the algorithm does not collapse and the acceptance rate is very reasonable.
The run time of the algorithm for $N=2000$ is only about five minutes and was coded in R (code is available upon request); the
code could be substantially improved to further reduce the computation time. Note that the update on $V^N|\cdots$
can be parallelized to improve the running speed.

\begin{figure}[!h]\centering
\includegraphics[width=10cm]{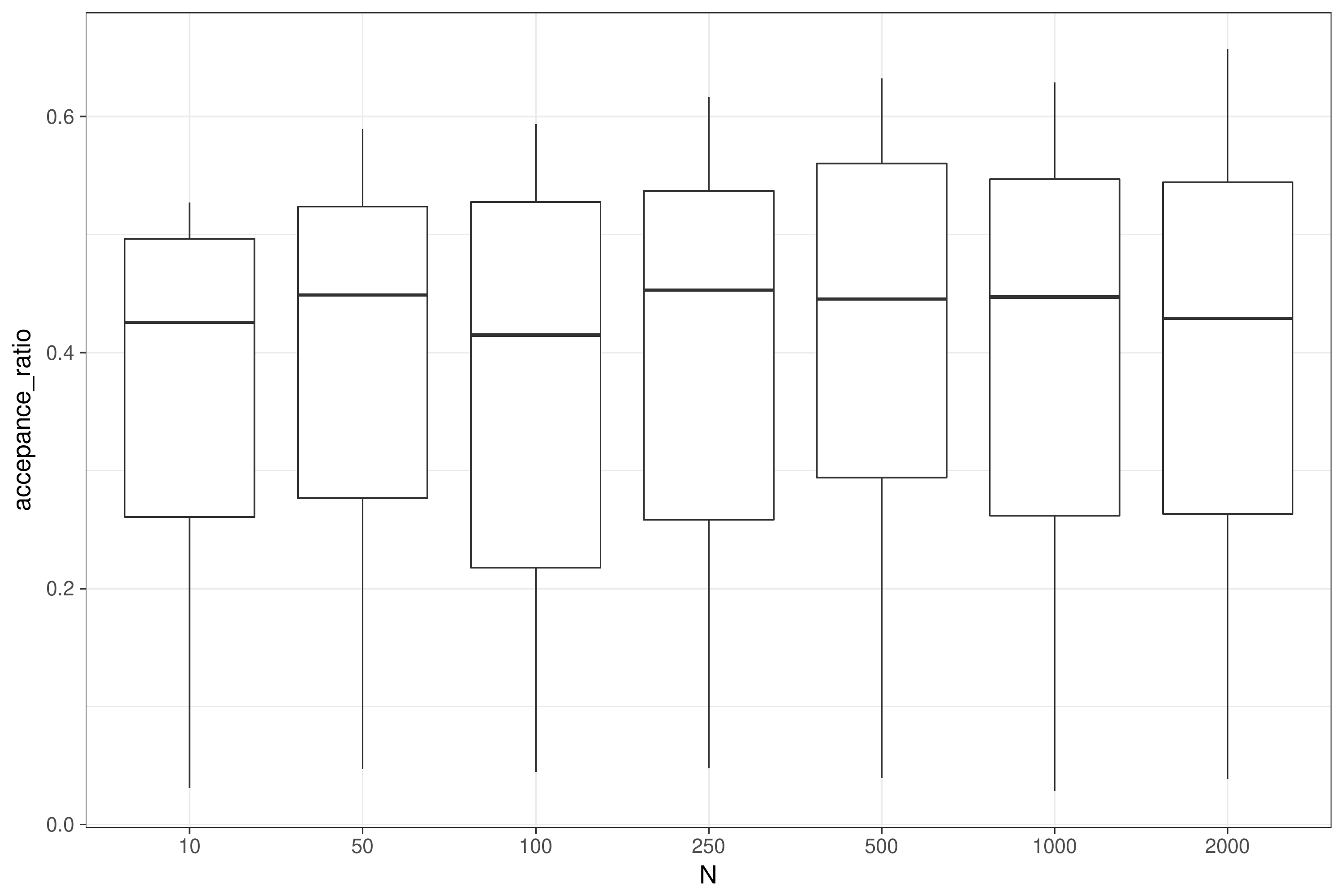}
\caption{Acceptance Rate against $N$. The
MCMC algorithm was run for $10,000$ iterations and all runs repeated $100$ times.
}
\label{fig:accrate}
\end{figure}

\subsection{Application to IBM Stock Data}

We return to the data of NYSE. 
In the model, we set the same model with above, and 
$\beta=1.411$, $h_N=T/N$, where $T$ is measured in minutes ($T=390\times 3$) and
$N=1156$. The stable index $\beta$ was estimated by applying `stableFit' function in R-package `fBasics' \cite{fbasics} to $\{\Delta_n^N X\}_{n=1,\ldots, N}$. The data contains some missing values which are ignored for the purposes of the analysis.
$\alpha_1, \alpha_2,\gamma$ are independently normal variables in the prior with mean $0$ and standard deviation $1$.
Algorithm \ref{alg:1} was run
for 100,000 iterations. 
The algorithm ran in approximately 1 hour
and the acceptance rate for the move on the parameters was 0.34.

In Figures \ref{fig:mcmc} and \ref{fig:diff} we can observe our results. 
Figure \ref{fig:mcmc} is a log-likelihood and 
time average drift and  jump coefficients: 
$$
\frac{1}{N}\sum_{n=1}^Na(X_{(n-1),h},\alpha),\ 
\frac{1}{N}\sum_{n=1}^Nc(X_{(n-1),h},\gamma).
$$
For
this reasonable size data set, the algorithm performs well, with good mixing over the parameters.
The acceptance rate is very reasonable as is the run-time - recall one can improve the code or coding language.

Figure \ref{fig:diff} is a p-p plot of the posterior expected value of the standardized residual:
$$
\frac{\Delta_n^N X-ha(X_{(n-1)h},\alpha)h}{c(X_{(n-1)h},\gamma)h^{1/\beta}}
$$
against $\beta$-stable distribution. This provides an idea of the ability of this model to fit the real NYSE data. We can see that the model, to some extent, can exhibit the behaviour
in the real data.

\begin{figure}[h]
\centering 
\subfigure[Log-Likelihood]{{\includegraphics[width=0.3\textwidth,height=5.5cm]{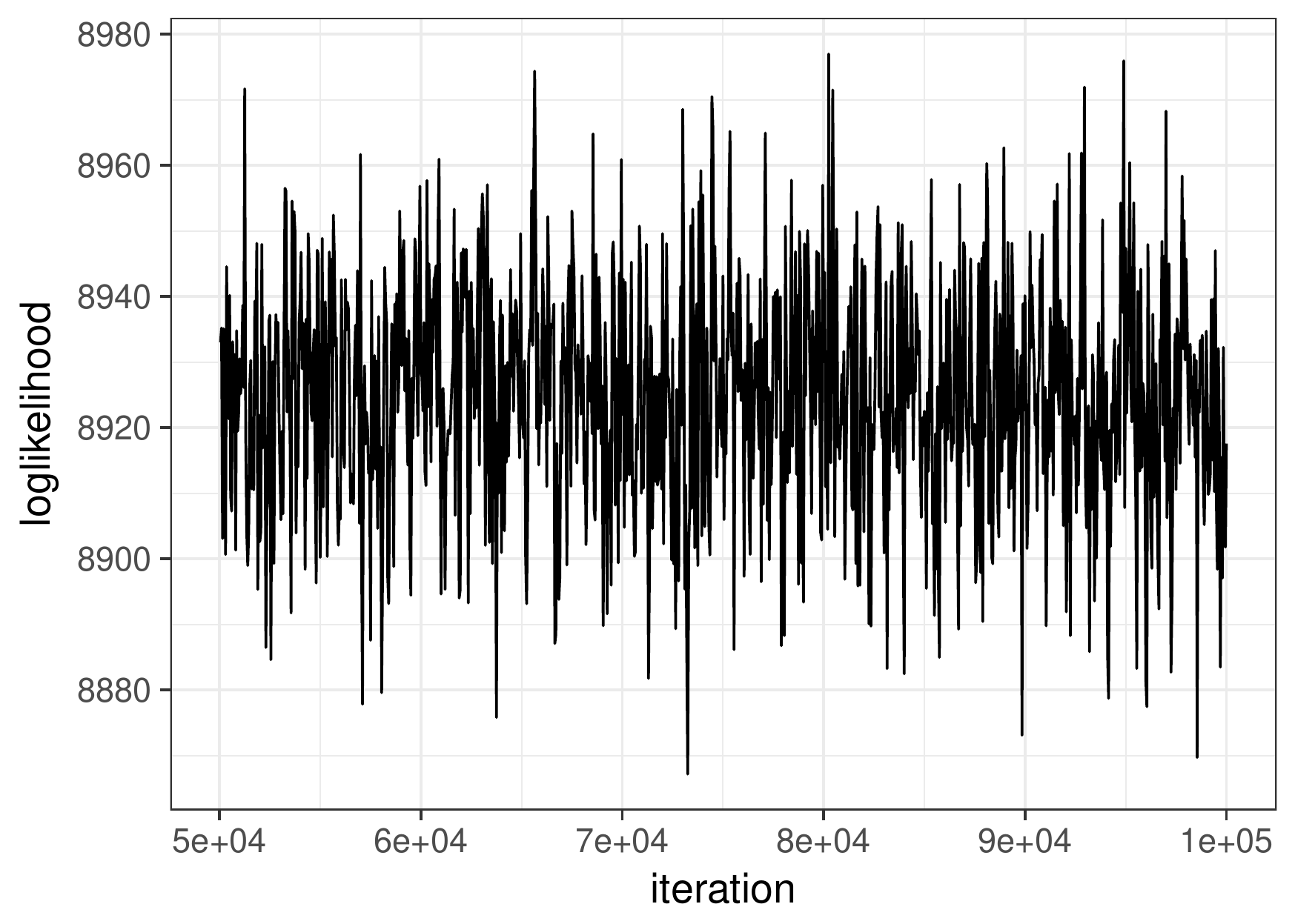}}}
\subfigure[Average drift coefficient]{{\includegraphics[width=0.3\textwidth,height=5.5cm]{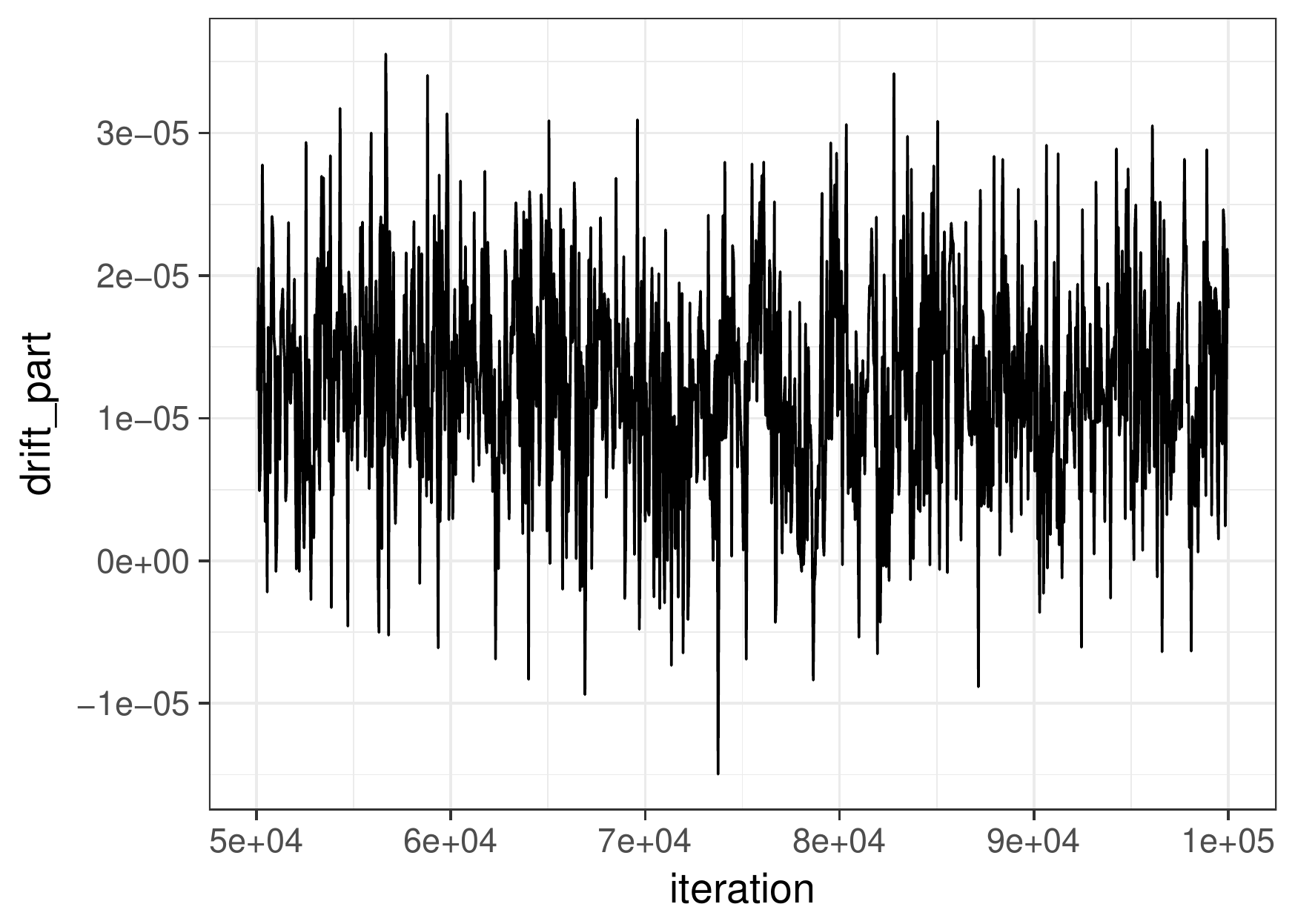}}}
\subfigure[Average jump coefficient]{{\includegraphics[width=0.3\textwidth,height=5.5cm]{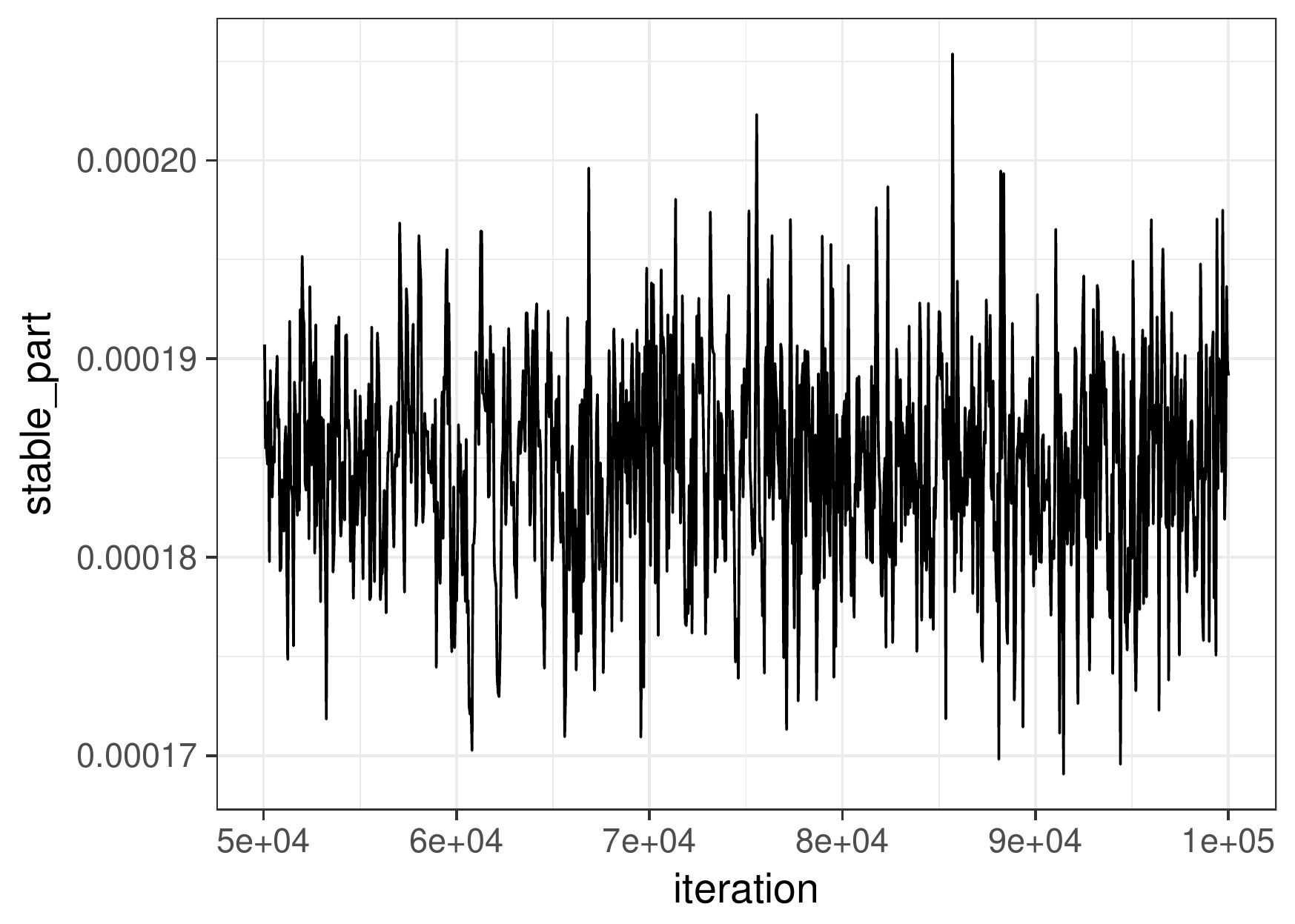}}}
\caption{MCMC Plots. The MCMC algorithm was run for 100,000 iterations and the last half with every $50$ iterations is displayed.}
\label{fig:mcmc} 
\end{figure}

\begin{figure}[!h]\centering
\includegraphics[width=10cm,height=6cm,angle=0]{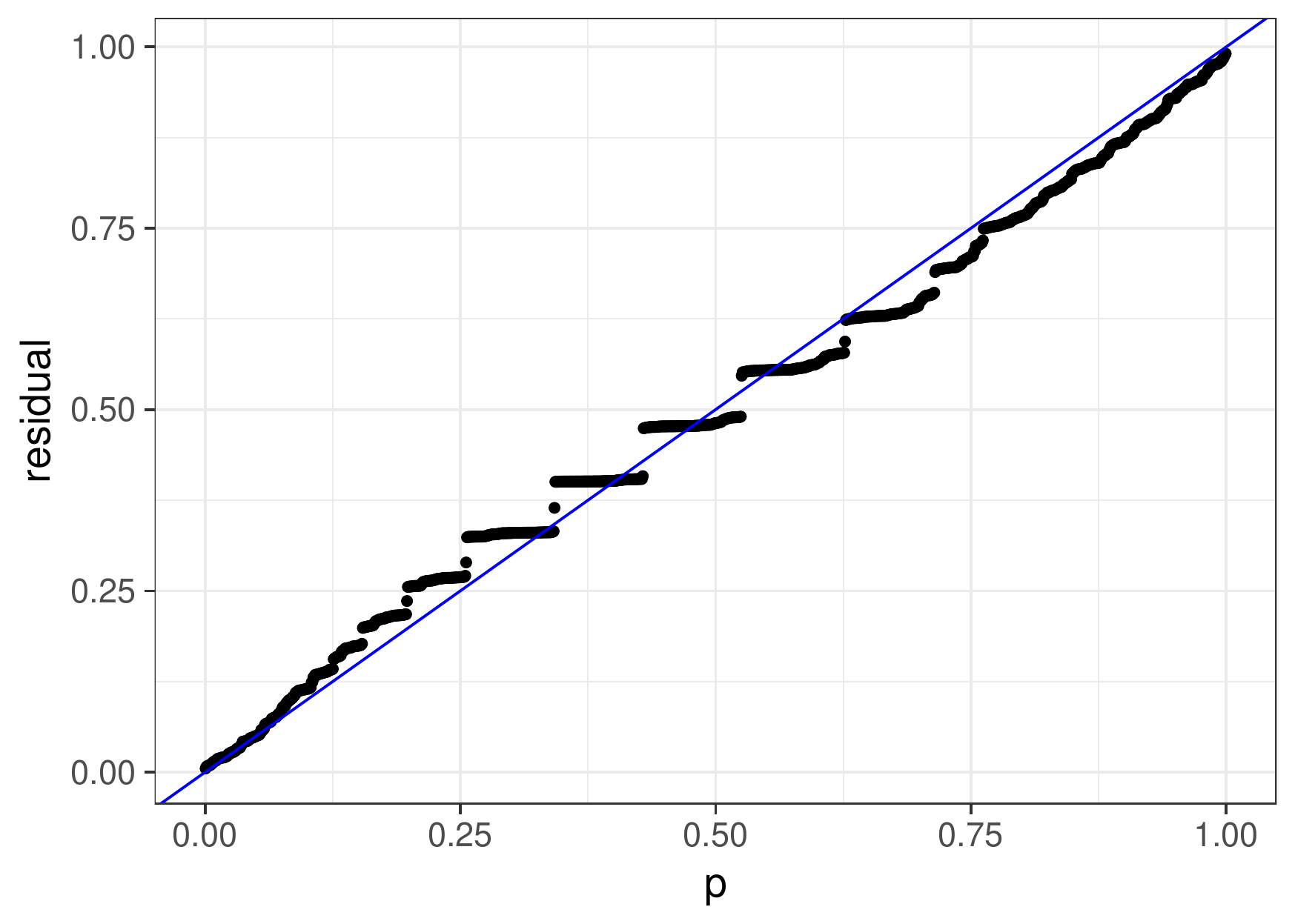}
\caption{Posterior expected p-p plot for the standardized residual against $\beta$-stable distribution}
\label{fig:diff}
\end{figure}

\subsubsection*{Acknowledgements}
We acknowledge JST CREST Grant Number JPMJCR14D7, Japan, for supporting the research.
AJ was additionally supported by Ministry of Education AcRF tier 2 grant,
R-155-000-161-112, and KK was additionally supported by JSPS KAKENHI Grant Number JP16K00046.

\appendix

\section{Proofs for the Bernstein-von Mises theorem}
\label{app:bvm}

The purpose of this section is to prove Theorem \ref{hm:thm_bvM}. 
First, we introduce the quadratic random field
\begin{equation}
\mbbz^{0}_{N}(u) =\exp\bigg( \Delta_{N}(\tz)[u] - \frac{1}{2}I(\tz)[u,u] \bigg).
\nonumber
\end{equation}
By integrating the Gaussian density,
\begin{equation}
\bigg( \int \mbbz_{N}^{0}(u)\pi(\tz)\dif u \bigg)^{-1} = \frac{1}{\pi(\tz)}\exp\bigg( \frac{1}{2}I(\tz)^{-1}[\Delta_{N}(\tz)^{\otimes 2}] \bigg) = O_{p}(1).
\nonumber
\end{equation}
To complete the proof it suffices to show
\begin{align}
& \del_{N} := \int \bigg|\mbbz_{N}(u)\pi(\tz+D_{N}^{-1}u) - \mbbz_{N}^{0}(u)\pi(\tz) \bigg|\dif u \cip 0,
\label{hm:thm_bvM-p6}
\end{align}
since the left-hand side of \eqref{hm:BvM} can be bounded from above by the quantity
\begin{align}
\int\bigg|\frac{\mbbz_{N}(u)\pi(\tz+D_{N}^{-1}u)}{\int \mbbz_{N}(u)\pi(\tz+D_{N}^{-1}u)\dif u} - \frac{\mbbz_{N}^{0}(u)\pi(\tz)}{\int \mbbz^{0}_{N}(u)\pi(\tz)\dif u} \bigg| \dif u
\le 2 \bigg( \int \mbbz_{N}^{0}(u)\pi(\tz) \dif u \bigg)^{-1}\del_{N}.
\nonumber
\end{align}
We only prove \eqref{hm:thm_bvM-p6} for $\beta\in(1,2)$, since the remaining case of $\beta=1$, where we have the single localization rate $\sqrt{N}(=\sqrt{N}h^{1-1/\beta})$, is completely analogous and simpler.

\medskip

First we will introduce a good event $G_{N}\in\mcf$ whose probability can get arbitrarily close to $1$ for $N\to\infty$ (see \eqref{hm:def_G_N} below).
Let
\begin{align}
\mbbz_{1,N}(u_{1};\al) &:= \exp\bigg\{\mbbh_{N}\bigg(\al, \gam_{0}+\frac{u_{1}}{\sqrt{N}}\bigg) - \mbbh_{N}(\al, \gam_{0}) \bigg\}, \label{def_Z1n}\\
\mbbz_{2,N}(u_{2}) &:= \exp\bigg\{ \mbbh_{N}\bigg(\al_{0} + \frac{u_{2}}{\sqrt{N}h^{1-1/\beta}}, \gam_{0}\bigg) - \mbbh_{N}(\al_{0}, \gam_{0}) \bigg\},
\label{def_Z2n}
\end{align}
both being defined to be zero for $u=(u_{1},u_{2})\in\mbbr^{p}\setminus\mbbu_{N}$.
Also define
\begin{align}
\mbby_{1,N}(\theta) &:=\frac{1}{N}\{\mbbh_{N}(\al,\gam) - \mbbh_{N}(\al,\gam_{0})\}, \nn\\
\mbby_{1}(\gam) &:= \frac{1}{T}\int_{0}^{T}\int\bigg[ 
\log\bigg\{\frac{c(X_{t},\gam_{0})}{c(X_{t},\gam)}\phi_{\beta}\bigg(\frac{c(X_{t},\gam_{0})}{c(X_{t},\gam)}z\bigg)\bigg\}
-\log\phi_{\beta}(z)\bigg]\phi_{\beta}(z)\dif z\dif t,
\nn \\
\mbby_{2,N}(\al) &:= \frac{1}{Nh^{2(1-1/\beta)}} \{\mbbh_{N}(\al,\gam_{0}) - \mbbh_{N}(\al_{0},\gam_{0})\}, \nn\\
\mbby_{2}(\al) &:= \frac{-1}{2T}\int_{0}^{T}\bigg( \frac{a(X_{t},\al_{0})-a(X_{t},\al)}{c(X_{t},\gam_{0})} \bigg)^{2}
\dif t \cdot \int \bigg(\frac{\p\phi_{\beta}}{\phi_{\beta}}(z)\bigg)^{2}\phi_{\beta}(z)\dif z.
\nn
\end{align}
The random functions $\mbby_{1}$ and $\mbby_{2}$ represent quasi-Kullback-Leibler divergences for estimating $\gam$ and $\al$, respectively;
we can estimate $\gam_{0}$ more quickly than $\al_{0}$ in case of $\beta\in(1,2)$.
For any matrix $A$ we will write $|A|$ for the Frobenius norm of $A$.
For later use we mention the following statements, which are given in \cite[Section 6]{masuda16.sqmle} or can be directly deduced from the arguments therein.

\begin{itemize}

\item We have (recall \eqref{hm:amn}) under $\tilde{\pr}$
\begin{align}
& \left(\Delta_{N}(\tz),\, -D_{N}^{-1}\p_{\theta}^{2}\mbbh_{N}(\tz)D_{N}^{-1}\right) \overset{\mcl}\to \left( I(\tz)^{1/2}\eta,\, I(\tz)\right),
\nn\\
& 
\bar{r}_{N}:=\sup_{\theta}\left| D_{N}^{-1}\p_{\theta}^{3}\mbbh_{N}(\theta)D_{N}^{-1} \right| = O_{p}(1).
\nn
\end{align}

\item There exists an a.s. positive random variable $\chi_{0}$ such that for each $\kappa>0$,
\begin{equation}
\sup_{\gam;\, |\gam-\gam_{0}|\ge\kappa}\mbby_{1}(\gam) \vee 
\sup_{\al;\, |\al-\al_{0}|\ge\kappa}\mbby_{2}(\al) \le -\chi_{0}\kappa^{2}\qquad \text{a.s.}
\nonumber
\end{equation}

\item In case of $\beta>1$, we have
\begin{align}
& \sup_{\al}\bigg| \frac{1}{\sqrt{N}}\p_{\gam}\mbbh_{N}(\al,\gam_{0}) \bigg| = O_{p}(1), \nn \\
& \sup_{\al} \bigg| -\frac{1}{N}\p_{\gam}^{2}\mbbh_{N}(\al,\gam_{0}) - C_{\gam}(\beta)\Sigma_{T,\gam}(\gam_{0}) \bigg|=o_{p}(1).
\nonumber
\end{align}

\end{itemize}
In addition, we will also need the following uniform laws of large numbers with convergence rates:
\begin{itemize}
\item There exists a constant $q\in(0,1)$ for which
\begin{equation}
(\sqrt{N})^{q}\sup_{\theta}|\mbby_{1,N}(\theta)-\mbby_{1}(\gam)| \vee (\sqrt{N}h^{1-1/\beta})^{q}\sup_{\al}|\mbby_{2,N}(\al)-\mbby_{2}(\al)| \cip 0.
\label{hm:thm_bvM-p++3}
\end{equation}
\end{itemize}
A sketch of derivation of \eqref{hm:thm_bvM-p++3} will be given at the end of this section.

\medskip

Set
\begin{equation}
\ep_{N}=N^{-c} \qquad\text{for}\qquad 0<c<\frac{q}{2}\bigg(\frac{1}{\beta}-\frac{1}{2}\bigg)
\label{hm:thm_bvM-p++2}
\end{equation}
with $q\in(0,1)$ given in \eqref{hm:thm_bvM-p++3}. Then, since
\begin{equation}
\sqrt{N}h^{1-1/\beta} \sim N^{1/\beta-1/2}
\label{hm:thm_bvM-p+7}
\end{equation}
up to a multiplicative constant, we see that $\ep_{N}\sqrt{N}h^{1-1/\beta} \uparrow \infty$ and $\ep_{N}^{-2}(\sqrt{N}h^{1-1/\beta})^{-q}=O(1)$;
since $\beta\ge 1$, the latter implies that $\ep_{N}^{-2}N^{-q/2}=O(1)$. It is straightforward to deduce from \eqref{hm:thm_bvM-p++3} that
\begin{equation}
\ep_{N}^{-2}\sup_{\theta}|\mbby_{1,N}(\theta)-\mbby_{1}(\gam)| \vee \ep_{N}^{-2}\sup_{\al}|\mbby_{2,N}(\al)-\mbby_{2}(\al)| \cip 0.
\nonumber
\end{equation}
Let $\lam_{\min}(A)$ denote the minimum eigenvalue of a square matrix $A$.
We now introduce the event $G_{N}=G_{N}(M,\lam)$ for positive constants $M$ and $\lam$:
\begin{align}
G_{N} &:= \bigg\{
|\Delta_{N}(\tz)| \vee \sup_{\al}\bigg| \frac{1}{\sqrt{N}}\p_{\gam}\mbbh_{N}(\al,\gam_{0}) \bigg| \le M,
\nn\\
&{}\qquad \left| -D_{N}^{-1}\p_{\theta}^{2}\mbbh_{N}(\tz)D_{N}^{-1} - I(\tz) \right| \vee \sup_{\al} \bigg| -\frac{1}{N}\p_{\gam}^{2}\mbbh_{N}(\al,\gam_{0}) - C_{\gam}(\beta)\Sigma_{T,\gam}(\gam_{0}) \bigg| <\lam,\nn\\
&{}\qquad 
\ep_{N}^{-2}\sup_{\theta}|\mbby_{1,N}(\theta)-\mbby_{1}(\gam)| \vee \ep_{N}^{-2}\sup_{\al}|\mbby_{2,N}(\al)-\mbby_{2}(\al)| < \frac{\lam}{2},
\nn\\
&{}\qquad \lam_{\min}(I_{\al}(\tz)) \wedge \lam_{\min}(I_{\gam}(\tz)) \ge 4\lam, \quad \chi_{0}\ge 4\lam, \quad \overline{r}_{N}<\frac{3\lam}{\ep_{N}} \bigg\}.
\label{hm:def_G_N}
\end{align}
Given any $\ep'>0$, we can find a triple $(M_{1},\lam_{1},\ep_{1})$ and an $N_{1}\in\mbbn$ such that
\begin{equation}
\sup_{N\ge N_{1}}\pr\left\{G_{N}(M,\lam)^{c}\right\}<\ep'
\nonumber
\end{equation}
holds for every $M\ge M_{1}$ and $\lam\in(0,\lam_{1}]$.
Since the objective here is the convergence in probability, we may and do focus on the event $G_{N}$ with $M$ and $\lam$ being sufficiently large and small, respectively.

\medskip

We divide the domain of the integration in the definition of $\del_{N}$: $\mbbr^{p}=A_{N}\sqcup A_{N}^{c}$ where
\begin{equation}
A_{N}:=\left\{u;\, |D_{N}^{-1}u|\le \ep_{N} \right\},
\nn
\end{equation}
and then denote the associated integrals by $\del'_{N}$ and $\del''_{N}$, respectively:
$\del_{N} = \del'_{N} + \del''_{N}$ with
\begin{align}
\del'_{N} &:= \int_{A_{N}} \bigg|\mbbz_{N}(u)\pi(\tz+D_{N}^{-1}u) - \mbbz_{N}^{0}(u)\pi(\tz) \bigg|\dif u, \nn\\
\del''_{N} &:= \int_{A_{N}^{c}} \bigg|\mbbz_{N}(u)\pi(\tz+D_{N}^{-1}u) - \mbbz_{N}^{0}(u)\pi(\tz) \bigg|\dif u.
\nonumber
\end{align}
We will deal with these terms separately.

\medskip

First we show that $\del'_{N}\cip 0$. We have $\del'_{N}\le\del'_{1,N}+\del'_{2,N}$, where
\begin{align}
\del'_{1,N} &:= \int_{A_{N}} \mbbz_{N}(u)\left| \pi(\tz+D_{N}^{-1}u)-\pi(\tz)\right|\dif u,
\nn\\
\del'_{2,N} &:= \pi(\tz)\int_{A_{N}}\left| \mbbz_{N}(u)-\mbbz_{N}^{0}(u) \right|\dif u.
\nonumber
\end{align}
By the third order Taylor expansion
\begin{align}
\mbbz_{N}(u) &= \exp\bigg\{
\Delta_{N}(\tz)[u] - \frac{1}{2}\bigg( I(\tz) + \left( -D_{N}^{-1}\p_{\theta}^{2}\mbbh_{N}(\tz)D_{N}^{-1} - I(\tz) \right)
\nn\\
&{}\qquad -\frac{1}{3}
\left(D_{N}^{-1}\p_{\theta}^{3}\mbbh_{N}(\check{\theta}_{n}(u))D_{N}^{-1}\right)[D_{N}^{-1}u]
\bigg)[u,u]\bigg\}
\nn
\end{align}
for some random point $\check{\theta}_{n}(u)$ on the segment joining $\tes$ and $\tz$,
we see that $\mbbz_{N}(u) \le \exp(M|u| - \lam|u|^{2})$ for $u\in A_{N}$ on $G_{N}$.
Hence, under Assumption \ref{hm:A3} we obtain
\begin{equation}
\del'_{1,N} \le \sup_{u \in A_{N}} \left| \pi(\tz+D_{N}^{-1}u)-\pi(\tz)\right|
\int e^{M|u| - \lam|u|^{2}} \dif u \cip 0.
\nonumber
\end{equation}
To handle $\del'_{2,N}$, we introduce the random function
\begin{equation}
M_{N}(u) := I_{G_{N}}(\omega) I_{A_{N}}(u) \left| \mbbz_{N}(u)-\mbbz_{N}^{0}(u) \right|.
\nonumber
\end{equation}
To deduce the required convergence
\begin{equation}
\overline{M}_{N}:=\int M_{N}(u)\dif u \cip 0,
\label{hm:thm_bvM-p++1}
\end{equation}
we make use of the subsequence argument: fix any infinite sequence $\mbbn'\subset\mbbn$. In view of the estimate
\begin{align}
M_{N}(u) &\le I_{G_{N}}(\omega) I_{A_{N}}(u)\left( \mbbz_{N}(u) + \mbbz_{N}^{0}(u)\right) \nn\\
&\le e^{M|u| - \lam|u|^{2}} + e^{M|u| - 2\lam|u|^{2}} \lesssim e^{M|u| - \lam|u|^{2}} \qquad \text{a.s.}
\nonumber
\end{align}
and the dominated convergence theorem, it suffices to show that there exists a further subset $\mbbn''=\{N''\}\subset\mbbn'$ along which $M_{N''}(u) \to 0$ a.s. for each $u$.
We have
\begin{equation}
\left| \log\mbbz_{N}(u) - \log\mbbz^{0}_{N}(u) \right|
\lesssim |u|^{2}\bigg(
\left| -D_{N}^{-1}\p_{\theta}^{2}\mbbh_{N}(\tz)D_{N}^{-1} - I(\tz) \right| + \bar{r}_{N}\ep_{N}
\bigg) =: |u|^{2}R_{N}.
\nonumber
\end{equation}
Since $| -D_{N}^{-1}\p_{\theta}^{2}\mbbh_{N}(\tz)D_{N}^{-1} - I(\tz) | \vee \bar{r}_{N}\ep_{N} \to 0$, 
it is possible to pick a further subset $\mbbn''=\{N''\}\subset\mbbn'$ along which $R_{N''}\to 0$ a.s.;
note that the random sequence $(R_{N})$ is free from the variable $u$.
With this $\mbbn''$, for each $u$ we have
\begin{align}
M_{N''}(u) &\le \mbbz_{N''}^{0}(u)\left| \exp\left\{\log\mbbz_{N}(u) - \log\mbbz^{0}_{N}(u)\right\}-1 \right| \nn\\
&\le \mbbz_{N''}^{0}(u) |u|^{2}R_{N''} e^{|u|^{2}R_{N''}} \to 0\qquad \text{a.s.},
\nonumber
\end{align}
followed by $\overline{M}_{N''}\to 0$ a.s., hence by \eqref{hm:thm_bvM-p++1}. We conclude that $\del'_{N}\cip 0$.

\medskip

Now we turn to the proof of $\del''_{N}\cip 0$, still focusing on the event $G_{N}$ of \eqref{hm:def_G_N}.
Note that $A_{N}^{c}\downarrow\emptyset$ under \eqref{hm:thm_bvM-p++2} since $|D_{N}^{-1}u|>\ep_{N}$ implies that $|u| \gtrsim \ep_{N}\sqrt{N}h^{1-1/\beta}\uparrow\infty$.
Under the condition \eqref{hm:thm_bvM-p++2} we have $\ep_{N}/|D_{N}^{-1}| \sim C\ep_{N}\sqrt{N}h^{1-1/\beta} \to \infty$, so that
\begin{equation}
\int_{A_{N}^{c}}\mbbz_{N}^{0}(u)\dif u \le \int_{|u| \ge \ep_{N}/|D_{N}^{-1}|} \exp(M|u| - 2\lam|u|^{2}) \dif u \to 0
\nonumber
\end{equation}
on $G_{N}$.
Since $\pi$ is bounded, we are left to show that 
\begin{equation}
\int_{A_{N}^{c}}\mbbz_{N}(u)\dif u \cip 0.
\nn
\end{equation}
Write $u=(u_{2},u_{1})\in\mbbr^{p_{\al}}\times\mbbr^{p_{\gam}}$ and observe that (recall \eqref{def_Z1n} and \eqref{def_Z2n})
\begin{equation}
\mbbz_{N}(u) \le \bigg(\sup_{\al}\mbbz_{1,N}(u_{1};\al)\bigg) \mbbz_{2,N}(u_{2}) =: \mbbz_{1,N}(u_{1})\mbbz_{2,N}(u_{2}).
\nonumber
\end{equation}
Since $A_{N}^{c}\subset \{u;\,|u_{2}|\ge (\ep_{N}/2)\sqrt{N}h^{1-1/\beta}\} \cup \{u;\,|u_{1}|\ge (\ep_{N}/2)\sqrt{N}\}$, we obtain
\begin{align}
\int_{A_{N}^{c}}\mbbz_{N}(u)\dif u &\le 
\int_{|u_{1}|\ge(\ep_{N}/2)\sqrt{N}} \mbbz_{1,N}(u_{1})\dif u_{1} \cdot \int \mbbz_{2,N}(u_{2})\dif u_{2} \nn\\
&{}\qquad + \int \mbbz_{1,N}(u_{1})\dif u_{1} \cdot \int_{|u_{2}|\ge (\ep_{N}/2)\sqrt{N}h^{1-1/\beta}}\mbbz_{2,N}(u_{2})\dif u_{2}.
\nn
\end{align}
We will only show that
\begin{align}
\int_{|u_{1}|\ge(\ep_{N}/2)\sqrt{N}} \mbbz_{1,N}(u_{1})\dif u_{1} &\cip 0,
\label{hm:thm_bvM-p+3} \\
\int \mbbz_{1,N}(u_{1})\dif u_{1} &= O_{p}(1).
\label{hm:thm_bvM-p+6}
\end{align}
Indeed, in view of the definition \eqref{hm:def_G_N} of the good event $G_{N}$, in order to deduce
\begin{equation}
\int_{|u_{2}|\ge(\ep_{N}/2)\sqrt{N}h^{1-1/\beta}} \mbbz_{2,N}(u_{2})\dif u_{2} \cip 0 \quad \text{and} \quad
\int \mbbz_{2,N}(u_{2})\dif u_{2} = O_{p}(1),
\nonumber
\end{equation}
we can follow exactly the same route as in the proofs of \eqref{hm:thm_bvM-p+3} and \eqref{hm:thm_bvM-p+6} below,
with $\mbby_{1,N}(\theta)$ and $\mbby_{1}(\gam)$ replaced by $\mbby_{2,N}(\al)$ and $\mbby_{2}(\al)$, respectively.

On $G_{N}$ we have $\ep_{N}^{-2}\sup_{\gam;\, |\gam-\gam_{0}|\ge \ep_{N}/2}\mbby_{1}(\gam)\le -\lam$, so that
\begin{align}
\sup_{|u_{1}|\ge (\ep_{N}/2)\sqrt{N}}\log \mbbz_{1,N}(u_{1})
& \lesssim N \bigg( \sup_{\theta}\left|\mbby_{1,N}(\theta)-\mbby_{1}(\gam)\right| +\sup_{\gam;\, |\gam-\gam_{0}|\ge \ep_{N}/2}\mbby_{1}(\gam)\bigg) \nn\\
& \le -\frac{\lam}{2}(\sqrt{N}\ep_{N})^{2} \lesssim -N^{1-2c}.
\nonumber
\end{align}
Therefore, recalling that $\mbbz_{1,N}(u_{1})=0$ outside the set $\mbbu_{1,N}:=\sqrt{N}(\Theta_{\gam}-\gam_{0})\subset\mbbr^{p_{\gam}}$ with $\Theta_{\gam}$ being bounded, we obtain the following estimate for some positive constants $C_{0}$ and $C_{1}$: on $G_{N}$,
\begin{align}
\int_{|u_{1}|\ge(\ep_{N}/2)\sqrt{N}} \mbbz_{1,N}(u_{1})\dif u_{1}
&\le e^{-C_{0}N^{1-2c}}\int_{\{|u_{1}|\ge (\ep_{N}/2)\sqrt{N}\}\cap\mbbu_{1,N}}\dif u_{1} \nn\\
&\lesssim e^{-C_{0}N^{1-2c}}\int_{|u_{1}|\le C_{1}\sqrt{N}}\dif u_{1} \lesssim N^{p_{\gam}/2}e^{-C_{0}N^{1-2c}} \to 0.
\label{hm:thm_bvM-p+4}
\end{align}
Hence \eqref{hm:thm_bvM-p+3} is obtained.

It follows from \eqref{hm:thm_bvM-p+4} that there exist $N_{1}\in\mbbn$ and $K>0$ such that for every $M_{1}>1$,
\begin{align}
& \sup_{N\ge N_{1}}\pr\bigg( G_{N} \cap\bigg\{ \int \mbbz_{1,N}(u_{1})\dif u_{1} > M_{1} \bigg\} \bigg) \nn\\
& \le \sup_{N\ge N_{1}}\pr\bigg( G_{N} \cap\bigg\{ \int_{|u_{1}|\ge K} \mbbz_{1,N}(u_{1})\dif u_{1} > \frac{M_{1}}{2} \bigg\} \bigg) \nn\\
&{}\qquad\qquad + \sup_{N\ge N_{1}}\pr\bigg( G_{N} \cap\bigg\{ \sup_{|u_{1}|\le K} \mbbz_{1,N}(u_{1})\gtrsim \frac{M_{1}}{2K^{p_{\gam}}} \bigg\} \bigg) \nn\\
&\le \sup_{N\ge N_{1}}\pr\bigg( \sup_{|u_{1}|\le K} \mbbz_{1,N}(u_{1})\gtrsim \frac{M_{1}}{2K^{p_{\gam}}} \bigg).
\nonumber
\end{align}
For every $K>0$ the sequence $\{\sup_{|u_{1}|\le K} \mbbz_{1,N}(u_{1})\}_{N}$ is tight in $\mbbr$, hence we can make the last upper bound arbitrarily small by taking a sufficiently large $M_{1}$.
This verifies \eqref{hm:thm_bvM-p+6}, and we are left to deduce \eqref{hm:thm_bvM-p++3}.

\bigskip

\noindent
\textit{Proof of \eqref{hm:thm_bvM-p++3}.}
Let
\begin{align*}
&a_{n-1}(\alpha) = a(X _ {(n - 1) h}, \alpha),\ 
c_{n-1}(\gamma) = c(X _ {(n - 1) h}, \gamma), \ 
\Delta^N_n X = X _ {nh} - X _ {(n-1)h},\\
&\epsilon_n(\theta) = \epsilon_{N,n}(\theta) := \frac{\Delta^N_nX - h a_{n-1}(\alpha)}{h ^ {1 / \beta}c_{n-1}(\gamma)}. 
\end{align*}
Let $z_{n}:=h^{-1/\beta}(J_{nh}-J_{(n-1)h})\sim J_{1}$ and $b_{n-1}(\theta):=c^{-1}_{n-1}(\gam)\{a_{n-1}(\al_{0})-a_{n-1}(\al)\}$.
Below we will repeatedly make use of several statements in \cite[Sections 6.2 and 6.3]{masuda16.sqmle}, hence at this stage it should be noted that, by the localization procedure \cite[Section 6.1 and the references therein]{masuda16.sqmle}, without loss of generality we may and do suppose that
\begin{align}\nn
\mathbb{E}\left(\sup_{t\le T}|X_t|^q\right)\lesssim 1,\quad 
\sup_{t\in [s,s+h]\cap [0,T]}\mathbb{E}\left(|X_t-X_s|^q|\mathcal{F}_s\right)\lesssim h(1+|X_s|^C)
\end{align}
for any $q\ge 2$ and $s\in [0,T]$.
This fact in particular implies that 
\begin{equation}
\sup_{t\in [s,s+h]\cap [0,T]}\mathbb{E}(|X_t-X_s|)\lesssim \sqrt{h}.
\label{hm:thm_bvM-p+8}
\end{equation}
For convenience, for a sequence of random functions $\{f_{N}(\cdot)\}$ on $\overline{\Theta}$ and a positive sequence $(b_{N})$ we will write $f_{N}(\theta)=O^{\ast}_{p}(b_{N})$ if $\sup_{\theta}|f_{N}(\theta)|=O_{p}(b_{N})$.

\medskip

We can write $\mbby_{1,N}(\theta)$ as
\begin{align}
\mbby_{1,N}(\theta) &= \frac{1}{N}\sum_{n=1}^{N}\log\frac{c_{n-1}(\gam_{0})}{c_{n-1}(\gam)}
+\frac{1}{N}\sum_{n=1}^{N}\log\phi_{\beta}\left(\ep_{n}(\theta)\right)
-\frac{1}{N}\sum_{n=1}^{N}\log\phi_{\beta}\left(\ep_{n}(\al,\gam_{0})\right).
\label{hm:thm_bvM-p+5}
\end{align}
The function $y\mapsto\log\phi_{\beta}(y)$ fulfills the conditions on $\eta$ in Lemmas 6.2 and 6.3 of \cite{masuda16.sqmle}.
Applying the two lemmas to the second and third terms in the right-hand side of \eqref{hm:thm_bvM-p+5}, we can deduce that
\begin{align}
\mbby_{1,N}(\theta) &= \frac{1}{N}\sum_{n=1}^{N}\log\frac{c_{n-1}(\gam_{0})}{c_{n-1}(\gam)}
+\frac{1}{N}\sum_{n=1}^{N}\E\bigg\{\log\phi_{\beta}\bigg(
\frac{c_{n-1}(\gam_{0})}{c_{n-1}(\gam)}z_{n}+h^{1-1/\beta}b_{n-1}(\theta)\bigg)\bigg|\mcf_{(n-1)h}\bigg\} \nn\\
&{}\qquad -\frac{1}{N}\sum_{n=1}^{N}\E\bigg\{\log\phi_{\beta}\bigg(
z_{n}+h^{1-1/\beta}b_{n-1}(\al,\gam_{0})\bigg)\bigg|\mcf_{(n-1)h}\bigg\} 
+O^{\ast}_{p}\bigg(\frac{1}{\sqrt{N}} \vee h^{2-1/\beta}\bigg).
\nn
\end{align}
Proceeding as in the second equality in Eq.(6.19) of \cite{masuda16.sqmle}, we obtain
\begin{align}
\mbby_{1,N}(\theta) &= \frac{1}{N}\sum_{n=1}^{N}\bigg[
\log\frac{c_{n-1}(\gam_{0})}{c_{n-1}(\gam)}
+\int\bigg\{
\log\phi_{\beta}\bigg(\frac{c_{n-1}(\gam_{0})}{c_{n-1}(\gam)}z\bigg) - \log\phi_{\beta}(z)\bigg\}\phi_{\beta}(z)dz\bigg]
\nn\\
&{}\qquad +O^{\ast}_{p}\bigg(h^{1-1/\beta} \vee \frac{1}{\sqrt{N}} \vee h^{2-1/\beta}\bigg).
\label{hm:thm_bvM-p+6'}
\end{align}
Using the estimate \eqref{hm:thm_bvM-p+8} combined with the inequality given in the proof of \cite[Lemma 6.4]{masuda16.sqmle}, we can deduce from \eqref{hm:thm_bvM-p+6'} that
\begin{align}
\sup_{\theta}|\mbby_{1,N}(\theta)-\mbby_{1}(\gam)|
&\le O_{p}\bigg(\sqrt{h} \vee h^{1-1/\beta} \vee \frac{1}{\sqrt{N}} \vee h^{2-1/\beta}\bigg) \nn\\
&= O_{p}(h^{1-1/\beta})=O_{p}\left(N^{-(1-1/\beta)}\right).
\nonumber
\end{align}
Hence $(\sqrt{N})^{q}\sup_{\theta}|\mbby_{1,N}(\theta)-\mbby_{1}(\gam)| \cip 0$ holds for $q\in\left(0, 2(1-1/\beta)\right)$.

As for $\mbby_{2,N}$, following a similar line to the case of $\mbby_{1,N}$ we can derive
\begin{align}
\mbby_{2,N}(\al) &= \frac{1}{2N}\sum_{n=1}^{N}b^{2}_{n-1}(\al,\gam_{0})
\Big(\p_{y}^{2}\log\phi_{\beta}(y)\Big)\Big|_{y=\ep_{n}(\tz)}
+ O^{\ast}_{p}\bigg(\frac{1}{\sqrt{N}h^{1-1/\beta}} \vee h^{1-1/\beta}\bigg).
\nonumber
\end{align}
Here, we also made use of \cite[Corollary 6.6]{masuda16.sqmle} (the function $y\mapsto\p_{y}\log\phi_{\beta}(y)$ is odd) and the arguments in \cite[Section 6.3.2]{masuda16.sqmle}.
Then it is not difficult to arrive at $\sup_{\al}|\mbby_{2,N}(\al)-\mbby_{2}(\al)|=O_{p}(N^{-s})$ for any sufficiently small $s>0$. In view of \eqref{hm:thm_bvM-p+7} we conclude that $(\sqrt{N}h^{1-1/\beta})^{q}\sup_{\al}|\mbby_{2,N}(\al)-\mbby_{2}(\al)| \cip 0$ for $q>0$ small enough. The proof of \eqref{hm:thm_bvM-p++3} is thus complete.


\section{Proof for the local consistency of the Metropolis-Hastings algorithm}
\label{app:lc}

We prove Proposition \ref{prop:mtk} in this section. 
Without loss of generality, we can assume $s_N\rightarrow s$ in probability for some random variable $s$. 
For notational simplicity, we write $\bar{F}^N$ for the rescaled version of the function or measure $F^N$ by $u\mapsto \theta_0+D_N^{-1}u$, and write 
$$
\bar{\nu}_N(\dif u,\dif v|x^N)=\bar{\Pi}(\dif u|x^N)\bar{Q}_N(u,\dif v|x^N),\qquad
\bar{\mu}_N(\dif u,\dif v|x^N)=\bar{\Pi}(\dif u|x^N)\bar{P}_N(u,\dif v|x^N)
$$
and 
$$
\nu(\dif u,\dif v|s)=\Pi(\dif u|s)Q(u,\dif v|s),\qquad
\mu(\dif u,\dif v|s)=\Pi(\dif u|s)P(u,\dif v|s). 
$$
The equation (\ref{BvMMH}) becomes 
$$\|\bar{\nu}_N(\cdot|x^N)-\nu(\cdot|s_N)\|_{\mathrm{TV}}\rightarrow 0.$$
By Lemmas 2 and 3 of \cite{Kamatani10} the following convergence is sufficient for local consistency:
$$
\delta_N:=\|\bar{\mu}_N(\cdot|x^N)-\mu(\cdot|s)\|_{\mathrm{TV}}=o_p(1). 
$$
First, we prove the convergence of $\delta_N':=\|\mu(\cdot|s_N)-\mu(\cdot|s)\|_{\mathrm{TV}}$.
By triangular inequality, $\delta_N'$ is dominated above by the sum of 
\begin{align*}
\delta_{N,1}':=&\|\nu(\cdot|s)A(\cdot|s)-\nu(\cdot|s_N)A(\cdot|s_N)\|_{\mathrm{TV}},\\ 	
\delta_{N,2}':=&\|\Pi(\dif u|s)R(u|s)\delta_u(\dif v)-\Pi(\dif u|s_N)R(u|s_N)\delta_u(\dif v)\|_{\mathrm{TV}}. 
\end{align*}
For the former, by Assumption \ref{as:kernel}, 
\begin{align*}
\delta_{N,1}'=2\int\left(\pi(u|s)q(u,v|s)A(u,v|s)-\pi(u|s_N)q(u,v|s_N)A(u,v|s_N)\right)^+\dif u\dif v=o_p(1)
\end{align*}
by the dominated convergence theorem, where $x^+=\max\{0,x\}$. We can prove $\delta_{N,3}':=\|\nu(\cdot|s)-\nu(\cdot|s_N)\|_{\mathrm{TV}}=o_p(1)$
in the same way. 
On the other hand, by triangular inequality, 
\begin{align*}
	\delta_{N,2}'&=\|\Pi(\dif u|s)R(u|s)-\Pi(\dif u|s_N)R(u|s_N)\|_{\mathrm{TV}}\\
&=\left\|\left\{\nu(\cdot\times \mathbb{R}^p|s)-\int\nu(\cdot\times\dif v|s)A(\cdot,v|s)\right\}-\left\{\nu(\cdot\times \mathbb{R}^p|s_N)-\int\nu(\cdot\times v|s_N)A(\cdot,v|s_N)\right\}\right\|_{\mathrm{TV}}\\
&\le \delta_{N,3}'+\delta_{N,1}'=o_p(1). 
\end{align*}
Hence, $\delta_N'\le\delta_{N,1}'+\delta_{N,2}'\rightarrow 0$ in probability. 

Next, we prove the convergence of $\delta_N'':=\|\bar{\mu}_N(\cdot|x^N)-\mu(\cdot|s_N)\|_{\mathrm{TV}}$.
By the same argument as above, it is sufficient to show the convergence of 
$\delta_{N,1}'':=\|\bar{\nu}_N(\cdot|x^N)\bar{A}_N(\cdot|x^N)-\nu(\cdot|s_N)A(\cdot|s_N)\|_{\mathrm{TV}}$
and $\delta_{N,3}'':=\|\bar{\nu}_N(\cdot|x^N)-\nu(\cdot|s_N)\|_{\mathrm{TV}}$. 
We only proves the former. 
By triangular inequality, 
\begin{align*}
\delta_{N,1}''&\le \left\|\left\{\bar{\nu}_N(\cdot|x^N)-\nu(\cdot|s_N)\right\}\bar{A}_N(\cdot|x^N)\right\|_{\mathrm{TV}}+\left\|\nu(\cdot|s_N)\left(\bar{A}_N(\cdot|x^N)-A(\cdot|s_N)\right)\right\|_{\mathrm{TV}}\\
&\le
\left\|\bar{\nu}_N(\cdot|x^N)-\nu(\cdot|s_N)\right\|_{\mathrm{TV}}+\int |\bar{A}_N(u,v|x^N)-A(u,v|s_N)|\nu(\dif u,\dif v|s_N).
\end{align*}
The first term converges to $0$ by assumption. 
Since $\nu(\cdot|s_N)$ converges to $\nu(\cdot|s)$, 
and $s_N$ is tight, 
for any $\epsilon>0$, we can choose a compact set $K_p\subset\mathbb{R}^p\times\mathbb{R}^p$ and $K_q\subset\mathcal{S}$ so that 
$$
\limsup_{N\rightarrow\infty}\mathbf{E}_{\theta_0}^{(N)}\left[\nu(K_p^c|s_N)\right]<\epsilon/2,\qquad \limsup_{N\rightarrow\infty}\mathbf{P}_{\theta_0}^{(N)}(s_N\in K_q^c)<\epsilon/2. 
$$
Thus, 
\begin{align*}
&\mathbf{E}_{\theta_0}^{(N)}\left[\int |\bar{A}_N(u,v|x^N)-A(u,v|s_N)|\nu(\dif u,\dif v|s_N)\right]\\
&\le \epsilon + \mathbf{E}_{\theta_0}^{(N)}\left[\int_{K_p} |\bar{A}_N(u,v|x^N)-A(u,v|s_N)|\nu(\dif u,\dif v|s_N), s_N\in K_q\right] + o(1)\\
&\le \epsilon + c\int_{K_p} \mathbf{E}_{\theta_0}^{(N)}\left[|\bar{A}_N(u,v|x^N)-A(u,v|s_N)|\right]\dif u\dif v+ o(1)=\epsilon +o(1), 
\end{align*}
where $c=\sup_{s\in K_q}\sup_{u,v}\pi(u|s)q(u,v|s)$ is the upper bound of the probability density function of $\nu(\dif u,\dif v|s_N)$ when $s_N\in K_q$. 
This completes the proof of $\delta_N''\rightarrow 0$ in probability, and hence 
$\delta_N\le \delta_N'+\delta_N''=o_p(1)$.

\section{Convergence of the acceptance ratio}

\subsection{Setting and notation}\label{hm:sec_conv-ar}

We keep using some notation introduced in Section \ref{app:bvm}. 
We consider 
an extended probability space
\begin{align}\label{eq:oa}
	\tilde{\Omega} = \Omega \times A,\ 
	\tilde{\mathcal{F}} = \mathcal{F} \otimes \mathcal{A},\ 
	\tilde{\mathbb{P}} = \mathbb{P}(\dif \omega)\mathbb{Q}_\omega(\dif a)
\end{align}
where $(A,\mathcal{A})$ is a measurable space, and $\mathbb{Q}_\omega(\dif a)$
is a probability transition kernel from $\Omega$ to $A$. We consider a stochastic process
\begin{align*}
	\tilde{X}_t(\omega, a) = X _ t(\omega)=\omega_t.
\end{align*}
Fix $u \in \mathbb{R} ^ d$. Set 
$\theta_N=(\alpha_N,\gamma_N)=\theta_0+D_N^{-1}u$. 
We now consider random variables $V_n,\ n=1,\ldots, N$, which correspond to the pseudo-data generated in Algorithm \ref{alg:1} from parameter 
$\theta=\theta_N$.  Thus, for each $\omega\in\Omega$, the random variables $V_n(\omega,\cdot):A\rightarrow\mathbb{R}_+\ (n = 1, \ldots, N)$
are independent, and 
\begin{align*}
&\tilde{\mathbb{P}} \left( V _ n \in \dif v |\omega \right) = \mathbb{Q}_\omega(V_n\in\dif v)=F_\beta(\dif v|\epsilon_n(\theta_N)).
\end{align*}
The log likelihood and the augmented-data (pseudo $+$ observed data) log likelihood are
$$
\mathbb{H}_N(\theta)=-\sum_{n=1}^N\log c_{n-1}(\gamma)+\sum_{n=1}^N\log\phi_\beta(\epsilon_n(\theta)),\ 
\mathbb{H}^\dagger_N(\theta)=
-\sum_{n=1}^N\log c_{n-1}(\gamma)-\frac{1}{2}\sum_{n=1}^N\frac{\epsilon_n(\theta)^2}{V_n}.
$$
Here we omit terms that do not include $\theta$. 
We define the Fisher information matrix for the augmented-data model by 
\begin{align*}
I ^ \dagger(\theta _ 0)	:=
\mathrm{diag}\left(
C _ \alpha ^ \dagger (\beta) \Sigma _ {T, \alpha}(\theta _ 0), 
C _ \gamma ^ \dagger (\beta) \Sigma _ {T, \gamma}(\gamma _ 0)
\right), 
\end{align*}
with
\begin{align*}
	C_\alpha^\dagger(\beta):=\int_{\mathbb{R}_+}\frac{1}{v}F_\beta(\dif v), \quad C^\dagger_\gamma(\beta):=2
\end{align*}
and that for the pseudo-data model by 
\begin{align*}
I ^ *(\theta _ 0)	:=I ^ \dagger(\theta _ 0)-I(\theta _ 0)=
\mathrm{diag}\left(
C _ \alpha ^ * (\beta) \Sigma _ {T, \alpha}(\theta _ 0), 
C _ \gamma ^ * (\beta) \Sigma _ {T, \gamma}(\gamma _ 0)
\right), 
\end{align*} 
with
$$
C_\alpha^*(\beta):=C_\alpha^\dagger(\beta)-C_\alpha(\beta), \quad C^*_\gamma(\beta):=C^\dagger_\gamma(\beta)-C_\gamma(\beta).
$$


In the next section, we will use the following law of large numbers. 

\begin{proposition}
\label{lem:lln}[Proposition 6.5 of \cite{masuda16.sqmle}]
If $\eta(x)\in \mathcal{C}^1(\mathbb{R})$ and $\pi(x,\theta)\in\mathcal{C}^1(\mathbb{R}\times\Theta)$ satisfy $\sup_{x\in\mathbb{R}}|\eta(x)|+|\eta'(x)|\lesssim 1$ and $\sup_\theta|\pi(x,\theta)|+|\partial_\theta\pi(x,\theta)|\lesssim 1+|x|^C$ for some $C>0$, then
\begin{align}
& N^{-1}\sum_{n=1}^N\eta(\epsilon_n(\theta_N))\pi(X_{(n-1)h},\theta_N)=O_p(1), \nn\\
& N^{-1}\sum_{n=1}^N\eta(\epsilon_n(\theta_N))\pi(X_{(n-1)h},\theta_N)= \frac{1}{T}\int\eta(x)\phi_\beta(x)\dif x\int_0^T\pi(X_t,\theta_0)\dif t+o_p(1).
\label{eq:lln}
\end{align}
\end{proposition}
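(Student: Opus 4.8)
The plan is to establish the convergence \eqref{eq:lln} first; the stated boundedness $O_p(1)$ then follows automatically, since a sequence converging in probability to an a.s.-finite limit is tight. Writing $S_N := N^{-1}\sum_{n=1}^N \eta(\epsilon_n(\theta_N))\pi(X_{(n-1)h},\theta_N)$, I would reduce it in three stages: (i) replace the weight argument $\theta_N$ by $\tz$; (ii) replace the standardized residual $\epsilon_n(\theta_N)$ by the clean stable increment $z_n = h^{-1/\beta}\Delta^N_n J \sim \phi_\beta$; and (iii) apply a martingale-plus-Riemann-sum argument to the resulting sum $N^{-1}\sum_n\eta(z_n)\pi(X_{(n-1)h},\tz)$. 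Throughout I would work on the localized event carrying the moment bounds $\E(\sup_{t\le T}|X_t|^q)\lesssim 1$ and $\sup_{t\in[s,s+h]}\E(|X_t - X_s|^q|\mcf_s)\lesssim h(1+|X_s|^C)$ recalled in Appendix \ref{app:bvm}.

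For step (i), since $\pi\in\mcc^1$ with $\sup_\theta|\p_\theta\pi(x,\theta)|\lesssim 1+|x|^C$ and $|\theta_N - \tz| = |D_N^{-1}u|\to 0$, the mean value theorem gives $|\pi(X_{(n-1)h},\theta_N) - \pi(X_{(n-1)h},\tz)|\lesssim |D_N^{-1}u|(1+|X_{(n-1)h}|^C)$; multiplying by $|\eta|\lesssim 1$ and averaging, this contributes $O_p(|D_N^{-1}|)\cdot N^{-1}\sum_n(1+|X_{(n-1)h}|^C)=o_p(1)$, as the average is $O_p(1)$ by the moment bound. For step (ii), I would use the two-sided estimate $|\eta(x)-\eta(y)|\le \min\{2\|\eta\|_\infty,\ \|\eta'\|_\infty|x-y|\}$ together with the decomposition $\epsilon_n(\theta_N) = \frac{c_{n-1}(\gam_0)}{c_{n-1}(\gam_N)}z_n + h^{1-1/\beta}b_{n-1}(\theta_N) + (\text{Euler discretization error})$, where the factor $c_{n-1}(\gam_0)/c_{n-1}(\gam_N)-1 = O(N^{-1/2})$, the drift correction is $O(h^{1-1/\beta})=o(1)$, and the discretization error is higher order by the increment moment bound. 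The truncated Lipschitz estimate is essential: it dampens the heavy tails of the $\beta$-stable $z_n$ so that the conditional moment estimates of Lemmas 6.2--6.4 and Corollary 6.6 of \cite{masuda16.sqmle} apply to give $N^{-1}\sum_n|\eta(\epsilon_n(\theta_N))-\eta(z_n)|\,|\pi(X_{(n-1)h},\tz)| = o_p(1)$.

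For step (iii) I would exploit that $z_n$ is independent of $\mcf_{(n-1)h}$ while $\pi(X_{(n-1)h},\tz)$ is $\mcf_{(n-1)h}$-measurable, so that $\E[\eta(z_n)\pi(X_{(n-1)h},\tz)\mid\mcf_{(n-1)h}] = (\int\eta\phi_\beta\,\dif x)\,\pi(X_{(n-1)h},\tz)$. Splitting into the predictable part plus the martingale-difference sum $N^{-1}\sum_n\{\eta(z_n)-\E\eta(z_n)\}\pi(X_{(n-1)h},\tz)$, the latter has $L^2$-norm of order $N^{-1/2}$ because $\eta$ is bounded, $\E[\pi(X_{(n-1)h},\tz)^2]\lesssim 1$, and the martingale differences are orthogonal; hence it is $o_p(1)$. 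The predictable part equals $(\int\eta\phi_\beta\,\dif x)\cdot T^{-1}\cdot h\sum_n\pi(X_{(n-1)h},\tz)$, and $h\sum_n\pi(X_{(n-1)h},\tz)\to\int_0^T\pi(X_t,\tz)\dif t$ a.s.\ by convergence of left Riemann sums of the bounded c\`adl\`ag path $t\mapsto\pi(X_t,\tz)$. Combining the three stages gives \eqref{eq:lln}.

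The main obstacle is step (ii): cleanly absorbing the stable increments into the limiting density $\phi_\beta$. Because $z_n$ is heavy-tailed (infinite variance for $\beta\in[1,2)$, and infinite mean when $\beta=1$), a naive Lipschitz bound produces averages of $|z_n|$ that need not be tight; the boundedness of $\eta$ must be used in tandem with its Lipschitz property through the $\min$-estimate, and the discretization residual must be controlled via conditional moment bounds rather than pathwise, since it arises from a stochastic integral against the stable driver. This is precisely where the stable-increment machinery of \cite{masuda16.sqmle} enters, and I would invoke those lemmas rather than re-derive the estimates from scratch.
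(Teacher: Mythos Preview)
The paper does not supply its own proof of this proposition: it is quoted directly as Proposition~6.5 of \cite{masuda16.sqmle} and invoked as a black-box law-of-large-numbers tool in the subsequent lemmas. There is therefore no in-paper argument to compare your proposal against.

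That said, your three-stage reduction---replace $\theta_N$ by $\theta_0$ in the polynomial weight via the mean-value theorem, then pass from $\epsilon_n(\theta_N)$ to the clean stable increment $z_n=h^{-1/\beta}\Delta_n^N J$ using the bounded-Lipschitz estimate $|\eta(x)-\eta(y)|\le\min\{2\|\eta\|_\infty,\|\eta'\|_\infty|x-y|\}$, and finally a martingale-difference plus left-Riemann-sum split---is exactly the structure of the arguments in \cite[Section~6]{masuda16.sqmle} that the present paper repeatedly leans on (see the derivation of \eqref{hm:thm_bvM-p++3} in Appendix~\ref{app:bvm}). Your identification of step~(ii) as the delicate point is correct: the heavy tails of $z_n$ forbid a pure Lipschitz bound, and the interplay of boundedness and differentiability of $\eta$, together with the localized conditional moment estimates on the Euler discretization residual, is precisely what Lemmas~6.2--6.4 of \cite{masuda16.sqmle} are designed to handle. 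Your sketch is sound and faithful to the source.
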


We also use the following fact: Let $\beta\in (1,2)$. If $\pi(x,\theta)$ satisfies the condition in Proposition \ref{lem:lln}, then
\begin{align}\label{eq:g}
	\frac{1}{Nh^{1-1/\beta}}\sum_{n=1}^N g_\beta(\epsilon_n(\theta_N))\pi(X_{(n-1)h},\theta_N)=O_p((\sqrt{N}h^{1-1/\beta})^{-1}). 
\end{align}
This convergence comes from Corollary 6.6. of \cite{masuda16.sqmle} together with the estimate
\begin{align*}
	|g_\beta(\epsilon_n(\theta_N))-g_\beta(\epsilon_n(\alpha_0,\gamma_N))|\lesssim |\epsilon_n(\theta_N)-\epsilon_n(\alpha_0,\gamma_N))|\lesssim \frac{1}{\sqrt{N}h^{1-1/\beta}}(1+|X_{(n-1)h}|^C). 
\end{align*}

\subsection{Some properties of $F_\beta$}

Let 
\begin{align*}
	g_\beta(x):=\frac{\partial}{\partial x} \log \phi _ \beta(x),\ 
	h_\beta(x):=\frac{\partial^2}{\partial x^2}\log\phi_\beta(x)-\frac{1}{x}\frac{\partial}{\partial x}\log\phi_\beta(x). 
\end{align*}
Recall that by the property of stable law (see pp.88--89 of \cite{MR1739520}), we
have $\phi_\beta(x)\sim c|x|^{-(\beta +1)}$ as  $|x|\rightarrow\infty$ for some $c>0$.  
Moreover, the probability density function 
$f_\beta(v)$ of $F_\beta$ is bounded above, and as $v\rightarrow+\infty$, we have  $f_\beta(v)\sim c|v|^{-\beta/2-1}$ for some $c>0$, 
and as $v\rightarrow 0$ the density of the positive stable distribution $f_\beta(v)$ converges to $0$ exponentially fast. 
Thus $\int v^{-k}F_\beta(\dif v|x)$ is continuous at $x=0$ for $k\ge 0$. 
Moreover, we have the following identities and an estimate. 

\begin{lemma}
Fix $\beta \in [1,2)$. 
\begin{enumerate}
\item 
\begin{equation}\label{eq:gh}
g_\beta(x)=-\int\frac{x}{v}F_\beta(\dif v|x),\ 
h_\beta(x)=\int\left(\frac{x}{v}+g_\beta(x)\right)^2F_\beta(\dif v|x). 
\end{equation}
	\item 
\begin{equation}\label{eq:identity}
\int x g_\beta(x)\phi_\beta(x)\dif x=-1,\ \int h_\beta(x)\phi_\beta(x)\dif x=C_\alpha^*(\beta), \int x^2h_\beta(x)\phi_\beta(x)\dif x=C_\gamma^*(\beta).	
\end{equation}
 \item 
For any $k\ge 0$, 
\begin{align}\label{eq:integrability}
\int_0^\infty v^{-k}F_\beta(\dif v|x)\lesssim |x|^{-2k}.
\end{align}
\end{enumerate}
\end{lemma}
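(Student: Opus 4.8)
The plan is to read everything off the normal variance--mixture representation \eqref{eq:pb}, $\phi_\beta(x)=\int_0^\infty (2\pi v)^{-1/2}e^{-x^2/(2v)}F_\beta(\dif v)$: for \eqref{eq:gh} one differentiates this identity in $x$ under the integral sign, for \eqref{eq:identity} one integrates the resulting expressions against $\phi_\beta$, and for \eqref{eq:integrability} one rescales the $v$-variable. Throughout I would use only the two tail facts recalled just before the lemma: $f_\beta$ is bounded, $f_\beta(v)\sim c\,v^{-\beta/2-1}$ as $v\to\infty$ and $f_\beta(v)\to 0$ exponentially as $v\downarrow 0$; and $\phi_\beta(x)\sim c|x|^{-(\beta+1)}$ as $|x|\to\infty$.

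\emph{Step 1 (identities \eqref{eq:gh}).} First I would justify differentiating \eqref{eq:pb} twice under the integral on any compact $x$-interval; the $x$-derivatives of the integrand are dominated by $|x|v^{-3/2}e^{-x^2/(2v)}$ and $(v^{-1}+x^2v^{-2})v^{-1/2}e^{-x^2/(2v)}$, each $F_\beta$-integrable locally uniformly in $x$ because of the behaviour of $f_\beta$ at $0$ and $\infty$. This yields
\begin{align*}
\partial_x\phi_\beta(x)&=-x\int_0^\infty \tfrac1v(2\pi v)^{-1/2}e^{-x^2/(2v)}F_\beta(\dif v),\\
\partial_x^2\phi_\beta(x)&=\int_0^\infty\Big(\tfrac{x^2}{v^2}-\tfrac1v\Big)(2\pi v)^{-1/2}e^{-x^2/(2v)}F_\beta(\dif v).
\end{align*}
Dividing by $\phi_\beta(x)$ and using the definition \eqref{eq:fb} of $F_\beta(\dif v|x)$ gives $g_\beta(x)=\partial_x\phi_\beta(x)/\phi_\beta(x)=-\int (x/v)F_\beta(\dif v|x)$, the first formula, and $\partial_x^2\phi_\beta(x)/\phi_\beta(x)=\int (x^2/v^2)F_\beta(\dif v|x)-\int (1/v)F_\beta(\dif v|x)$. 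Since $h_\beta(x)=\partial_x g_\beta(x)-x^{-1}g_\beta(x)=\partial_x^2\phi_\beta(x)/\phi_\beta(x)-g_\beta(x)^2-x^{-1}g_\beta(x)$ and $x^{-1}g_\beta(x)=-\int(1/v)F_\beta(\dif v|x)$ by the first formula, the two $\int(1/v)F_\beta(\dif v|x)$ contributions cancel, leaving $h_\beta(x)=\int(x^2/v^2)F_\beta(\dif v|x)-g_\beta(x)^2$. Expanding the square in $\int(x/v+g_\beta(x))^2F_\beta(\dif v|x)$ and inserting $\int(x/v)F_\beta(\dif v|x)=-g_\beta(x)$ converts this into the second formula of \eqref{eq:gh}.

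\emph{Step 2 (moment identities \eqref{eq:identity}).} The first is integration by parts: $\int xg_\beta(x)\phi_\beta(x)\dif x=\int x\,\partial_x\phi_\beta(x)\dif x=[x\phi_\beta(x)]_{-\infty}^{\infty}-\int\phi_\beta(x)\dif x=-1$, the boundary term vanishing since $x\phi_\beta(x)\sim c|x|^{-\beta}\to 0$. For the other two I would plug in the formula for $h_\beta$ from Step 1 together with \eqref{eq:fb} and swap the two integrations; the integrands being non-negative, Tonelli applies, and with the $N(0,v)$ moments $\int x^2(2\pi v)^{-1/2}e^{-x^2/(2v)}\dif x=v$, $\int x^4(2\pi v)^{-1/2}e^{-x^2/(2v)}\dif x=3v^2$,
\begin{align*}
\int\Big(\int \tfrac{x^2}{v^2}F_\beta(\dif v|x)\Big)\phi_\beta(x)\,\dif x &=\int_0^\infty v^{-1}F_\beta(\dif v)=C_\alpha^\dagger(\beta),\\
\int x^2\Big(\int \tfrac{x^2}{v^2}F_\beta(\dif v|x)\Big)\phi_\beta(x)\,\dif x &=3 .
\end{align*}
Together with $\int g_\beta^2\phi_\beta=C_\alpha(\beta)$ (its definition) this gives $\int h_\beta\phi_\beta=C_\alpha^\dagger(\beta)-C_\alpha(\beta)=C_\alpha^*(\beta)$; and expanding $C_\gamma(\beta)=\int(1+xg_\beta(x))^2\phi_\beta(x)\dif x$ and using the first identity shows $\int x^2 g_\beta^2\phi_\beta=C_\gamma(\beta)+1$, whence $\int x^2h_\beta\phi_\beta=3-(C_\gamma(\beta)+1)=2-C_\gamma(\beta)=C_\gamma^\dagger(\beta)-C_\gamma(\beta)=C_\gamma^*(\beta)$, using $C_\gamma^\dagger(\beta)=2$.

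\emph{Step 3 (estimate \eqref{eq:integrability}) and main obstacle.} Write $\int_0^\infty v^{-k}F_\beta(\dif v|x)=(\sqrt{2\pi}\,\phi_\beta(x))^{-1}\int_0^\infty v^{-k-1/2}e^{-x^2/(2v)}f_\beta(v)\dif v$. For $|x|\ge 1$, substitute $v=x^2w$ to get $(\sqrt{2\pi}\,\phi_\beta(x))^{-1}|x|^{1-2k}\int_0^\infty w^{-k-1/2}e^{-1/(2w)}f_\beta(x^2w)\dif w$; the global bound $f_\beta(y)\lesssim(1+y)^{-\beta/2-1}$ (boundedness of $f_\beta$ plus its $\sim c\,y^{-\beta/2-1}$ tail) gives $f_\beta(x^2w)\le(x^2w)^{-\beta/2-1}=|x|^{-\beta-2}w^{-\beta/2-1}$, so the $w$-integral is $\lesssim|x|^{-\beta-2}\int_0^\infty w^{-k-\beta/2-3/2}e^{-1/(2w)}\dif w$, a finite constant since the exponential removes the $w\downarrow 0$ singularity and $-k-\beta/2-3/2<-1$ for $k\ge 0$, $\beta\ge 1$. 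Combined with $\phi_\beta(x)^{-1}\lesssim|x|^{\beta+1}$ for $|x|\ge 1$ this bounds the left side by $|x|^{1-2k}\cdot|x|^{-\beta-2}\cdot|x|^{\beta+1}=|x|^{-2k}$. For $|x|\le 1$ the claim is immediate when $k=0$ and, when $k>0$, follows from $|x|^{-2k}\ge 1$ and $\sup_{|x|\le 1}\int_0^\infty v^{-k}F_\beta(\dif v|x)<\infty$ (continuity in $x$, noted before the lemma). The only real work lies in the bookkeeping of the three interchanges — differentiating under the integral in Step 1, Tonelli in Step 2, and the uniform estimate on $f_\beta(x^2w)$ in Step 3 — all of which reduce to the quoted exponential decay of $f_\beta$ near $0$ and its regularly varying tail at $\infty$; there is no conceptual obstacle.
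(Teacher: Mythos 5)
Your proof is correct and follows essentially the same route as the paper: differentiation under the integral in the mixture representation for part 1, Tonelli plus Gaussian moments for part 2, and the scaling substitution $v=x^2w$ combined with the tail asymptotics of $f_\beta$ and $\phi_\beta$ for part 3. The only deviations are cosmetic — you obtain $\int xg_\beta\phi_\beta\,\dif x=-1$ by integration by parts where the paper uses the change of variable $u=x/\sqrt{v}$, and you treat $|x|\le 1$ separately in part 3, which is in fact slightly more careful than the paper's argument (whose bound $\phi_\beta(x)^{-1}\lesssim|x|^{\beta+1}$ is only valid for $|x|$ bounded away from $0$).
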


\begin{proof}
The expression of $g_\beta(x)$ can be obtained via simple interchange of the derivative and the integral in the equation (\ref{eq:fb}). 
For the expression of $h_\beta(x)$, we have
\begin{align*}
	h_\beta(x)&=\frac{\phi_\beta''(x)}{\phi_\beta(x)}-g_\beta(x)^2-\frac{1}{x}g_\beta(x). 
\end{align*}
By (\ref{eq:pb}), the second derivative of $\phi_\beta(x)$ is
\begin{align*}
\frac{\partial^2}{\partial x^2}\int_{\mathbb{R}_+} \frac{1}{\sqrt{2\pi v}}\exp\left(-\frac{x^2}{2v}\right)F_\beta(\dif v)
&=
\int_{\mathbb{R}_+} \left(\frac{x^2}{v^2}-\frac{1}{v}\right)\frac{1}{\sqrt{2\pi v}}\exp\left(-\frac{x^2}{2v}\right)F_\beta(\dif v)\\
	&=\phi_\beta(x)\int_{\mathbb{R}_+} \left(\frac{x^2}{v^2}-\frac{1}{v}\right)F_\beta(\dif v|x).
\end{align*}
This equation yields the expression in (\ref{eq:gh}) by using the identity $\int v^{-1}F_\beta(\dif v|x)=-g_\beta(x)/x$. 

Next, we prove identities (\ref{eq:identity}). Applying the change of variable $u=x/\sqrt{v}$, we have
\begin{align*}
	\int x g_\beta(x)\phi_\beta(x)\dif x&=-\int x\left\{\frac{x}{v}F_\beta(\dif v|x)\right\}\phi_\beta(x)\dif x\\
	&=-\int\frac{x^2}{v}\frac{1}{\sqrt{2\pi v}}\exp\left(-\frac{x^2}{2v}\right)\dif xF_\beta(\dif v)=
-\int u^2\phi(u)\dif u=-1
\end{align*}
where $\phi(u)$ is the probability density function of the standard normal distribution. 
In the same way, by the change of variable, we obtain
$$
\int \left(\frac{x}{v}\right)^2F_\beta(\dif v|x)\phi_\beta(x)\dif x=C_\alpha^\dagger(\beta),\ 
\int x^2\left(\frac{x}{v}\right)^2F_\beta(\dif v|x)\phi_\beta(x)\dif x=3. 
$$
Then we have
$$\int h_\beta(x)\phi_\beta(x)\dif x=\int \left(\frac{x}{v}\right)^2F_\beta(\dif v|x)\phi_\beta(x)\dif x-\int g_\beta(x)^2\phi_\beta(x)\dif x= C_\alpha^*(\beta)$$
and 
\begin{align*}
\int x^2h_\beta(x)\phi_\beta(x)\dif x&=\int x^2\left(\frac{x}{v}\right)^2F_\beta(\dif v|x)\phi_\beta(x)\dif x-\int x^2g_\beta(x)^2\phi_\beta(x)\dif x\\
&=3-\int x^2g_\beta(x)^2\phi_\beta(x)\dif x=2-\int (1+xg_\beta(x))^2\phi_\beta(x)\dif x=C_\gamma^*(\beta). 
\end{align*}
Finally we check (\ref{eq:integrability}). By the property of stable law, 
\begin{align*}
	\int_0^\infty\frac{x^{2k}}{v^k}F_\beta(\dif v|x)&=\phi_\beta(x)^{-1}\int_0^\infty\frac{x^{2k}}{v^k}\frac{1}{\sqrt{2\pi v}}\exp\left(-\frac{x^2}{2v}\right)F_\beta(\dif v)\\
	&\lesssim\int_0^\infty\left(\frac{x^2}{v}\right)^{(2k+\beta+1)/2}\exp\left(-\frac{x^2}{2v}\right)\frac{\dif v}{v}\\
	&= \int_0^\infty\left(\frac{1}{u}\right)^{(2k+\beta+1)/2}\exp\left(-\frac{1}{2u}\right)\frac{\dif u}{u}<\infty
\end{align*}
by the change of variable $u =v/x^2$. 
Thus, the claim follows. 
\end{proof}

\subsection{Estimates for the likelihood functions}

Let $\Delta_{N}^\dagger(\theta_N)=D_N^{-1}\partial_\theta\mathbb{H}^\dagger_N(\theta_N)$
and set $\Delta_{N}^*(\theta_N)=\Delta_{N}^\dagger(\theta_N)-\Delta_{N}(\theta_N)$. 

\begin{lemma}
	\begin{equation}\label{eq:clan_centering}
	\|\mathcal{L}(\Delta_{N}^*(\theta_N)|\omega)- N_k(0, I^*(\theta_0))\|_{\mathrm{BL}}\rightarrow 0
	\end{equation}
	in $\mathbb{P}$-probability. 
\end{lemma}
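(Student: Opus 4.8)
The plan is to identify $\Delta_{N}^{*}(\theta_{N})$ as a normalized sum of conditionally centered, row-wise conditionally independent random vectors and to invoke a conditional Lindeberg--Feller central limit theorem with the (random) limiting covariance $I^{*}(\tz)$. Differentiating $\mbbh^{\dagger}_{N}$ and $\mbbh_{N}$ and using $\p_{\al}\epsilon_{n}(\theta)=-h^{1-1/\beta}c_{n-1}(\gam)^{-1}\p_{\al}a_{n-1}(\al)$ and $\p_{\gam}\epsilon_{n}(\theta)=-\epsilon_{n}(\theta)c_{n-1}(\gam)^{-1}\p_{\gam}c_{n-1}(\gam)$, and then applying $D_{N}^{-1}=\diag((\sqrt{N}h^{1-1/\beta})^{-1}I_{p_{\al}},(\sqrt{N})^{-1}I_{p_{\gam}})$ so that the factor $h^{1-1/\beta}$ cancels in the $\al$-block, one finds
\[
\Delta_{N}^{*}(\theta_{N})=\frac{1}{\sqrt{N}}\sum_{n=1}^{N}\zeta_{N,n},
\]
where $\zeta_{N,n}$ has $\al$-block $c_{n-1}(\gam_{N})^{-1}\p_{\al}a_{n-1}(\al_{N})\big(\epsilon_{n}(\theta_{N})/V_{n}+g_{\beta}(\epsilon_{n}(\theta_{N}))\big)$ and $\gam$-block $c_{n-1}(\gam_{N})^{-1}\p_{\gam}c_{n-1}(\gam_{N})\big(\epsilon_{n}(\theta_{N})^{2}/V_{n}+\epsilon_{n}(\theta_{N})g_{\beta}(\epsilon_{n}(\theta_{N}))\big)$. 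Conditionally on $\omega$ the $V_{n}$, hence the $\zeta_{N,n}$, are independent across $n$, and the first identity in \eqref{eq:gh} gives $\E[\epsilon_{n}(\theta_{N})/V_{n}\mid\omega]=-g_{\beta}(\epsilon_{n}(\theta_{N}))$, so that $\E[\zeta_{N,n}\mid\omega]=0$.

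For the conditional second moments, the second identity in \eqref{eq:gh} yields $\E[(\epsilon_{n}(\theta_{N})/V_{n}+g_{\beta}(\epsilon_{n}(\theta_{N})))^{2}\mid\omega]=h_{\beta}(\epsilon_{n}(\theta_{N}))$, so the $\al$-, $\gam$- and cross-blocks of $N^{-1}\sum_{n}\E[\zeta_{N,n}^{\otimes2}\mid\omega]$ are, respectively, $N^{-1}\sum_{n}c_{n-1}(\gam_{N})^{-2}\{\p_{\al}a_{n-1}(\al_{N})\}^{\otimes2}h_{\beta}(\epsilon_{n}(\theta_{N}))$, the analogous sum with the extra factor $\epsilon_{n}(\theta_{N})^{2}$, and the mixed one with factor $\epsilon_{n}(\theta_{N})$. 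Since (by the tail behaviour of $\phi_{\beta}$ and its derivatives, as already exploited in \cite{masuda16.sqmle}) the functions $h_{\beta}$, $x\mapsto x^{2}h_{\beta}(x)$ and $x\mapsto xh_{\beta}(x)$ are $\mcc^{1}$ with bounded values and derivatives, Proposition \ref{lem:lln} applies with these as $\eta$ and with $\pi(x,\theta)$ the relevant ratios of coefficients (which satisfy its growth condition by Assumption \ref{hm:A1}); combining \eqref{eq:lln} with $\int h_{\beta}\phi_{\beta}=C_{\al}^{*}(\beta)$, $\int x^{2}h_{\beta}\phi_{\beta}=C_{\gam}^{*}(\beta)$ from \eqref{eq:identity} and $\int xh_{\beta}\phi_{\beta}=0$ (oddness), the conditional covariance converges in $\pr$-probability to $\diag(C_{\al}^{*}(\beta)\Sigma_{T,\al}(\tz),C_{\gam}^{*}(\beta)\Sigma_{T,\gam}(\gam_{0}))=I^{*}(\tz)$.

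Next I would check a Lyapunov condition, $N^{-3/2}\sum_{n}\E[|\zeta_{N,n}|^{3}\mid\omega]\cip0$. The crucial point is that $|x|^{2k}\int_{0}^{\infty}v^{-k}F_{\beta}(\dif v\mid x)$ is bounded uniformly in $x$: for $|x|\le1$ use that $x\mapsto\int v^{-k}F_{\beta}(\dif v\mid x)$ is continuous, hence locally bounded, at the origin, and for $|x|>1$ use \eqref{eq:integrability}. Hence $\E[|\epsilon_{n}(\theta_{N})/V_{n}|^{3}\mid\omega]\lesssim1$ and $\E[|\epsilon_{n}(\theta_{N})^{2}/V_{n}|^{3}\mid\omega]\lesssim1$, while $g_{\beta}$ and $x\mapsto xg_{\beta}(x)$ are bounded (again by \eqref{eq:gh} and \eqref{eq:integrability}), so $\E[|\zeta_{N,n}|^{3}\mid\omega]\lesssim1+|X_{(n-1)h}|^{C}$ by Assumption \ref{hm:A1}. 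By the moment bound $\E(\sup_{t\le T}|X_{t}|^{q})\lesssim1$ recalled in the proof of \eqref{hm:thm_bvM-p++3}, $N^{-1}\sum_{n}\E[|\zeta_{N,n}|^{3}\mid\omega]=O_{p}(1)$, so the Lyapunov ratio is $N^{-1/2}O_{p}(1)\cip0$; in particular the individual conditional variances are uniformly negligible.

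Finally I would assemble the conditional CLT by characteristic functions: for fixed $\xi$, expanding $\E[e^{i\xi^{\top}\Delta_{N}^{*}(\theta_{N})}\mid\omega]=\prod_{n}\E[e^{i\xi^{\top}\zeta_{N,n}/\sqrt{N}}\mid\omega]$ to second order with vanishing conditional mean, the cubic remainder is $\le N^{-1/2}|\xi|^{3}\,N^{-1}\sum_{n}\E[|\zeta_{N,n}|^{3}\mid\omega]\cip0$, and together with the covariance limit and the uniform negligibility the usual product estimate gives $\E[e^{i\xi^{\top}\Delta_{N}^{*}(\theta_{N})}\mid\omega]\cip\exp(-\tfrac12 I^{*}(\tz)[\xi,\xi])$. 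A subsequence argument then upgrades this to \eqref{eq:clan_centering}: along any subsequence, pass to a further one on which the last convergence holds almost surely simultaneously for all $\xi$ in a countable dense set and on which $N^{-1}\sum_{n}\E[|\zeta_{N,n}|^{2}\mid\omega]$ converges almost surely (giving conditional tightness); on that full-probability event every weak subsequential limit of $\mcl(\Delta_{N}^{*}(\theta_{N})\mid\omega)$ has characteristic function $\xi\mapsto\exp(-\tfrac12 I^{*}(\tz)(\omega)[\xi,\xi])$, hence equals $N_{p}(0,I^{*}(\tz)(\omega))$, so the bounded-Lipschitz distance tends to $0$ almost surely along that further subsequence, which is the claim. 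I expect the main obstacle to be exactly the uniform-in-$x$ control of the inverse moments $\E[V_{n}^{-k}\mid\omega]$---the bound \eqref{eq:integrability} degenerates as $x\to0$ and must be patched using continuity at the origin---together with the care needed to formulate the conditional CLT with the genuinely random limiting covariance $I^{*}(\tz)$.
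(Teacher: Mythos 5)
Your proof is correct and follows essentially the same route as the paper: the same decomposition of $\Delta_N^*(\theta_N)$ into a normalized sum of conditionally independent, conditionally centered vectors, the same use of the identities \eqref{eq:gh} and \eqref{eq:identity} together with Proposition \ref{lem:lln} to identify the conditional covariance limit $I^*(\theta_0)$, the same moment bound via \eqref{eq:integrability} (the paper uses a fourth-moment Lyapunov condition where you use a third-moment one, an immaterial difference), and the same subsequence argument to reduce the random-covariance conditional CLT to an almost-sure one. Your observation that \eqref{eq:integrability} must be supplemented by continuity of $x\mapsto\int v^{-k}F_\beta(\dif v|x)$ at $x=0$ is exactly the point the paper records just before the lemma.
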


\begin{proof}
	Observe that
	\begin{align*}
		\partial_\alpha\epsilon_n(\theta)=-h^{1-1/\beta}\frac{\partial_\alpha a_{n-1}(\alpha)}{c_{n-1}(\gamma)},\ 
		\partial_\gamma\epsilon_n(\theta)=-\epsilon_n(\theta)\frac{\partial_\gamma c_{n-1}(\gamma)}{c_{n-1}(\gamma)}. 
	\end{align*}
	By this fact, we have
	\begin{align*}
		\Delta_{N}^\dagger(\theta_N)=
		N^{-1/2}\sum_{n=1}^N
		\begin{pmatrix}
			\frac{\epsilon_n(\theta_N)}{V_n}	
			\frac{\partial_\alpha a_{n-1}(\alpha_N)}{c_{n-1}(\gamma_N)}\\
			\left\{
				\frac{\epsilon_n(\theta_N)^2}{V_n} - 1
			\right\}
			\frac{\partial_\gamma c_{n-1}(\gamma_N)}{c_{n-1}(\gamma_N)}
		\end{pmatrix},
		\end{align*}
		and hence 
	\begin{align*}
		\Delta_{N}^*(\theta_N)=
		N^{-1/2}\sum_{n=1}^N
		\begin{pmatrix}
			\left\{
			\frac{\epsilon_n(\theta_N)}{V_n}	
			+g_\beta(\epsilon_n(\theta_N))\right\}
			\frac{\partial_\alpha a_{n-1}(\alpha_N)}{c_{n-1}(\gamma_N)}\\
			\left\{
				\frac{\epsilon_n(\theta_N)}{V_n}+g_\beta(\epsilon_n(\theta_N))
			\right\}\epsilon_n(\theta_N)
			\frac{\partial_\gamma c_{n-1}(\gamma_N)}{c_{n-1}(\gamma_N)} 
		\end{pmatrix}
		=:N^{-1/2}\sum_{n=1}^N\xi_n.
		\end{align*}
		Recall the definition of the extended probability space (\ref{eq:oa}). In this setting, the pseudo-data variables $V_1,\ldots, V_N$ are independent conditioned on $\omega\in\Omega$ and generated from $\mathbb{Q}_\omega$. 
		Fix $\omega\in\Omega$. Since $\int (x/v+g_\beta(x))F_\beta(\dif v|x)=0$,  the random variable $\Delta_{N}^*(\theta_N)(\omega,\cdot):A\rightarrow\mathbb{R}^p$ is a sum of independent variables  $\xi_n(\omega,\cdot)$ in $\mathbb{Q}_\omega$. 
		By the expression of $h_\beta$, the covariance matrix of $\xi_n$ conditioned on $\omega$ becomes 
		\begin{align*}
		\tilde{\mathbb{E}}[\xi_n^{\otimes 2}|\omega]=
		\begin{pmatrix}
			h_\beta(\epsilon_n(\theta_N))
			\left(\frac{\partial_\alpha a_{n-1}(\alpha_N)}{c_{n-1}(\gamma_N)}\right)^{\otimes 2}& 
			h_\beta(\epsilon_n(\theta_N))\epsilon_n(\theta_N)
			\left(\frac{\partial_\alpha a_{n-1}(\alpha_N)}{c_{n-1}(\gamma_N)}\right)\otimes \left(\frac{\partial_\gamma c_{n-1}(\gamma_N)}{c_{n-1}(\gamma_N)}\right)\\
			Sym.
			 &
			 			h_\beta(\epsilon_n(\theta_N))\epsilon_n(\theta_N)^2
			 \left(\frac{\partial_\gamma c_{n-1}(\gamma_N)}{c_{n-1}(\gamma_N)}\right)^{\otimes 2}
		\end{pmatrix}.
		\end{align*}
		This conditional covariance can be written as $\tilde{\mathbb{E}}[\xi_n^{\otimes 2}|\omega]=\eta(\epsilon_n(\theta_N))[\pi(X_{(n-1)h},\theta_N)]$ where 
		$$
		\eta(x)=h_\beta(x)\begin{pmatrix}1&x\\ x&x^2\end{pmatrix},\ \pi(x,\theta)=\left[\frac{\partial_\alpha a_{n-1}(\alpha_N)}{c_{n-1}(\gamma_N)},\frac{\partial_\gamma c_{n-1}(\gamma_N)}{c_{n-1}(\gamma_N)}\right]^{\otimes 2}. 
		$$
		By (\ref{eq:integrability}) and Assumption \ref{hm:A1}, $\eta(x)$ and $\pi(x,\theta)$ satisfy the condition of Proposition \ref{lem:lln}. On the other hand,  for the forth moment on $\xi_n$, we have
		\begin{align*}
			\tilde{\mathbb{E}}[|\xi_n|^4|\omega]^{1/4}\lesssim &
		\left\{\int \left|\frac{\epsilon_n(\theta_N)}{v}\right|^4F_\beta(\dif v|\epsilon_n(\theta_N))\right\}^{1/4}\left|\frac{\partial_\alpha a_{n-1}(\alpha_N)}{c_{n-1}(\gamma_N)}\right|\\
		&+
				\left\{\int \left|\frac{\epsilon_n(\theta_N)^2}{v}\right|^4F_\beta(\dif v|\epsilon_n(\theta_N))\right\}^{1/4}\left|\frac{\partial_\gamma c_{n-1}(\alpha_N)}{c_{n-1}(\gamma_N)}\right|\\
		\\
		\lesssim& 1+|X_{(n-1)h}|^C
		\end{align*}
		for some $C>0$ by (\ref{eq:integrability}) and Assumption \ref{hm:A1} together with 
		the Minkowski and Jensen inequalities. 
		Therefore by Proposition \ref{lem:lln}, 
		\begin{align*}
			\tilde{\mathbb{E}}[(\Delta_{N}^*(\theta_N))^{\otimes 2}|\omega]=N^{-1}\sum_{n=1}^N\tilde{\mathbb{E}}[\xi_n^{\otimes 2}|\omega]\rightarrow I^*(\theta_0),\ 
			N^{-2}\sum_{n=1}^N\tilde{\mathbb{E}}[|\xi_n|^4|\omega]\rightarrow 0
		\end{align*}
		in $\tilde{\mathbb{P}}$-probability. For any infinite elements $\mathbb{N}'\subset \mathbb{N}$ there exists a further subsequence $\mathbb{N}''=\{N''\}\subset \mathbb{N}'$
		such that the above convergence satisfies in almost surely in $\omega\in\Omega$. Therefore, by the central limit theorem, 
		$$
		\mathcal{L}(\Delta_{N''}^*(\theta_N)|\omega)\rightarrow N_k(0, I^*(\theta_0))
		$$
		almost surely. Thus, the claim follows. 
\end{proof}

\begin{lemma}
	\begin{equation}\label{eq:clan_fishermat}
	-D_N^{-1}\partial^2_\theta\mathbb{H}^\dagger_N(\theta_N)D_N^{-1}\rightarrow I^\dagger(\theta_0)		
	\end{equation}
	in $\tilde{\mathbb{P}}$-probability, and 
	\begin{equation}\label{eq:clan_remainder}
	\sup_\Theta\max_{k=1,\ldots, p}|\partial_{\theta_k}D_N^{-1}\partial^2_\theta\mathbb{H}^\dagger_N(\theta)D_N^{-1}|=O_p(1). 
	\end{equation}
\end{lemma}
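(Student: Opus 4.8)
The plan is to differentiate $\mathbb{H}^\dagger_N$ twice, conjugate by $D_N^{-1}$, and treat the resulting $2\times2$ block matrix entrywise, following the bookkeeping already used for $\Delta^*_N(\theta_N)$. Using $\mathbb{H}^\dagger_N(\theta)=-\sum_n\log c_{n-1}(\gamma)-\frac12\sum_n\epsilon_n(\theta)^2/V_n$ together with $\partial_\alpha\epsilon_n=-h^{1-1/\beta}\partial_\alpha a_{n-1}/c_{n-1}$ and $\partial_\gamma\epsilon_n=-\epsilon_n\,\partial_\gamma c_{n-1}/c_{n-1}$ (recall the proof of \eqref{eq:clan_centering}), a direct computation gives, with the sign of the statement included, that at $\theta=\theta_N$ the $(\alpha,\alpha)$-block of $-D_N^{-1}\partial^2_\theta\mathbb{H}^\dagger_N D_N^{-1}$ equals $N^{-1}\sum_n V_n^{-1}(\partial_\alpha a_{n-1}/c_{n-1})^{\otimes2}-(Nh^{1-1/\beta})^{-1}\sum_n\epsilon_n V_n^{-1}\partial^2_\alpha a_{n-1}/c_{n-1}$, the $(\gamma,\gamma)$-block equals $N^{-1}\sum_n\big\{(3\epsilon_n^2/V_n-1)(\partial_\gamma c_{n-1}/c_{n-1})^{\otimes2}+(1-\epsilon_n^2/V_n)\,\partial^2_\gamma c_{n-1}/c_{n-1}\big\}$, and the $(\alpha,\gamma)$-block equals $2N^{-1}\sum_n\epsilon_n V_n^{-1}(\partial_\alpha a_{n-1}/c_{n-1})\otimes(\partial_\gamma c_{n-1}/c_{n-1})$.

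For each of these sums I would replace the $V_n$-factors by their $\omega$-conditional expectations and then invoke Proposition \ref{lem:lln}. By \eqref{eq:gh}, $\tilde{\mathbb{E}}[V_n^{-1}|\omega]=\int v^{-1}F_\beta(\dif v|\epsilon_n(\theta_N))=-g_\beta(\epsilon_n(\theta_N))/\epsilon_n(\theta_N)$, $\tilde{\mathbb{E}}[\epsilon_n V_n^{-1}|\omega]=-g_\beta(\epsilon_n(\theta_N))$ and $\tilde{\mathbb{E}}[\epsilon_n^2 V_n^{-1}|\omega]=-\epsilon_n(\theta_N)g_\beta(\epsilon_n(\theta_N))$; the three multipliers $x\mapsto-g_\beta(x)/x$, $-g_\beta(x)$, $-xg_\beta(x)$ are bounded and of class $\mathcal{C}^{1}$ (smoothness of $\phi_\beta$, the stable tail $\phi_\beta(x)\sim c|x|^{-(\beta+1)}$, and \eqref{eq:integrability}), so Proposition \ref{lem:lln} converts the conditional means into $\big(\int(\cdot)\phi_\beta\,\dif x\big)$ times $\Sigma_{T,\alpha}(\theta_0)$ or $\Sigma_{T,\gamma}(\gamma_0)$. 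The scalar factors then fall out: $\int\big(\int v^{-1}F_\beta(\dif v|x)\big)\phi_\beta(x)\,\dif x=\int v^{-1}F_\beta(\dif v)=C_\alpha^\dagger(\beta)$ by \eqref{eq:fb} and Fubini; $\int(-xg_\beta(x))\phi_\beta(x)\,\dif x=1$ by \eqref{eq:identity}, so the first $(\gamma,\gamma)$-piece contributes $3\cdot1-1=2=C_\gamma^\dagger(\beta)$ while $\int(1+xg_\beta)\phi_\beta\,\dif x=0$ kills the second; and $\int g_\beta\phi_\beta\,\dif x=0$ by oddness, so the $(\alpha,\gamma)$-block has zero limit. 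The conditional-mean-subtracted fluctuations have $\omega$-conditional variance tending to $0$ since the $V_n$ are conditionally independent, $\tilde{\mathbb{E}}[V_n^{-2}|\omega]$, $\tilde{\mathbb{E}}[\epsilon_n^2V_n^{-2}|\omega]$, $\tilde{\mathbb{E}}[\epsilon_n^4V_n^{-2}|\omega]$ are bounded by \eqref{eq:integrability}, and the accompanying coefficient sums are $O_p(N)$; a subsequence argument as in the proof of \eqref{eq:clan_centering} then upgrades the $\omega$-conditional convergence to convergence in $\tilde{\mathbb{P}}$-probability. Finally the subleading $(\alpha,\alpha)$-term has conditional mean $(Nh^{1-1/\beta})^{-1}\sum_n g_\beta(\epsilon_n(\theta_N))(\cdots)=O_p\big((\sqrt Nh^{1-1/\beta})^{-1}\big)=o_p(1)$ by \eqref{eq:g}. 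Assembling the three blocks gives \eqref{eq:clan_fishermat}.

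For \eqref{eq:clan_remainder} one differentiates once more; every entry of $\partial_{\theta_k}(D_N^{-1}\partial^2_\theta\mathbb{H}^\dagger_N(\theta)D_N^{-1})$ becomes a finite sum of terms $N^{-1}\sum_n P(\epsilon_n(\theta))\psi_n(\theta)V_n^{-1}$ (plus bounded terms coming from $-\log c$), with $P$ a monomial of degree at most $2$ and $\psi_n(\theta)$ bounded by $1+|X_{(n-1)h}|^C$ uniformly over $\overline{\Theta}$ (Assumption \ref{hm:A1}), some of them carrying an extra $h^{1-1/\beta}$ or $h^{2(1-1/\beta)}$. The delicate point is uniformity in $\theta$, handled by noting $\sup_\theta\epsilon_n(\theta)^2\lesssim(1+|X_{(n-1)h}|^C)(\epsilon_n(\theta_N)^2+h^{2(1-1/\beta)})$ and $\tilde{\mathbb{E}}[\epsilon_n(\theta_N)^2V_n^{-1}|\omega]=-\epsilon_n(\theta_N)g_\beta(\epsilon_n(\theta_N))$, which is bounded because $|xg_\beta(x)|\lesssim1$; hence $\tilde{\mathbb{E}}[\sup_\theta(\epsilon_n(\theta)^2V_n^{-1})|\omega]\lesssim1+|X_{(n-1)h}|^C$, and the $O_p(1)$ bound for the degree-$2$ terms follows by Markov's inequality together with the localization moment bounds, the degree-$0$ and degree-$1$ cases being easier. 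The $h^{1-1/\beta}$-weighted terms carrying a factor $\epsilon_n$ are $o_p(1)$ by \eqref{eq:g} and the rest are trivially $o_p(1)$. The main obstacle, here and in \eqref{eq:clan_fishermat}, is precisely this control of the $V_n^{-1}$-weighted moments of $\epsilon_n$ — the weights being heavy-tailed and $\epsilon_n$ not integrable to high order after localization — which is resolved throughout by the identity \eqref{eq:gh} that couples $\tilde{\mathbb{E}}[V_n^{-k}|\omega]$ to $g_\beta$ and by the bound \eqref{eq:integrability}; the remaining work (the derivative bookkeeping, sorting the terms by their powers of $h^{1-1/\beta}$ and of $\epsilon_n$, and the conditional-variance estimates for the fluctuations) is routine though lengthy.
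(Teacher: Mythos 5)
Your proposal is correct and follows essentially the same route as the paper: compute the three blocks of $-D_N^{-1}\partial_\theta^2\mathbb{H}_N^\dagger(\theta_N)D_N^{-1}$ explicitly, replace the $V_n^{-1}$-weights by their $\omega$-conditional means via \eqref{eq:gh} (yielding $-g_\beta(\epsilon_n)/\epsilon_n$, $-g_\beta(\epsilon_n)$, $-\epsilon_n g_\beta(\epsilon_n)$), apply Proposition \ref{lem:lln} and the identities \eqref{eq:identity} to identify the limits $C_\alpha^\dagger\Sigma_{T,\alpha}$, $0$, $C_\gamma^\dagger\Sigma_{T,\gamma}$, kill the conditionally centered fluctuations by conditional independence plus \eqref{eq:integrability}, and dispose of the $h^{1-1/\beta}$-weighted remainder via \eqref{eq:g}. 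Your treatment of \eqref{eq:clan_remainder} is in fact more detailed than the paper's, which simply asserts that ``almost the same arguments'' control the third derivatives and omits the uniform-in-$\theta$ bookkeeping you supply.
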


\begin{proof}
By calculation, 
\begin{align}\label{eq:aa}
(Nh^{2(1-\beta)})^{-1}\partial_\alpha^2 \mathbb{H}_N^\dagger (\theta_N)	
=\frac{1}{Nh^{1-1/\beta}}\sum_{n=1}^N\frac{\epsilon_n(\theta_N)}{V_n}\left(\frac{\partial_\alpha^2 a_{n-1}(\alpha_N)}{c_{n-1}(\gamma_N)}\right)-\frac{1}{N}\sum_{n=1}^N\frac{1}{V_n}\left(\frac{\partial_\alpha a_{n-1}(\alpha_N)}{c_{n-1}(\gamma_N)}\right)^{\otimes 2}. 
\end{align}
The first term of the right-hand side of (\ref{eq:aa}) is 
\begin{align*}
-\frac{1}{Nh^{1-1/\beta}}\sum_{n=1}^Ng_\beta(\epsilon_n(\theta_N))\left(\frac{\partial_\alpha^2 a_{n-1}(\alpha_N)}{c_{n-1}(\gamma_N)}\right)
+\frac{1}{Nh^{1-1/\beta}}\sum_{n=1}^N\left(\frac{\epsilon_n(\theta_N)}{V_n} + g_\beta(\epsilon_n(\theta_N))\right)\left(\frac{\partial_\alpha^2 a_{n-1}(\alpha_N)}{c_{n-1}(\gamma_N)}\right). 
\end{align*}
Then, the first term is negligible by Proposition \ref{eq:lln} for $\beta=1$ and by (\ref{eq:g}) for $\beta\in (1,2)$, and the second term is also negligible since it is a sum of independent variables conditioned on $\omega$, and its variance is $o_p(1)$. 
Also, the second term of (\ref{eq:aa}) is 
\begin{align*}
\frac{1}{N}\sum_{n=1}^N\left(\frac{g_\beta(\epsilon_n(\theta_N))}{\epsilon_n(\theta_N)}\right)\left(\frac{\partial_\alpha a_{n-1}(\alpha_N)}{c_{n-1}(\gamma_N)}\right)^{\otimes 2}
-
\frac{1}{N}\sum_{n=1}^N\left(\frac{1}{V_n}+\frac{g_\beta(\epsilon_n(\theta_N))}{\epsilon_n(\theta_N)}\right)\left(\frac{\partial_\alpha a_{n-1}(\alpha_N)}{c_{n-1}(\gamma_N)}\right)^{\otimes 2},
\end{align*}
and again, the second term is negligible since it is a sum of conditionally independent random variables. Thus, we obtain
\begin{align*}
-(Nh^{2(1-\beta)})^{-1}\partial_\alpha^2 \mathbb{H}_N^\dagger (\theta_N)	
&= -\frac{1}{N}\sum_{n=1}^N\left(\frac{g_\beta(\epsilon_n(\theta_N))}{\epsilon_n(\theta_N)}\right)\left(\frac{\partial_\alpha a_{n-1}(\alpha_N)}{c_{n-1}(\gamma_N)}\right)^{\otimes 2}+o_p(1)
\nn\\
&= C_\alpha^\dagger(\beta)\Sigma_{T,\alpha}(\theta_0)+o_p(1)
\end{align*}
by Proposition \ref{lem:lln}. 

Similarly, we have 
\begin{align*}
(Nh^{(1-\beta)})^{-1}\partial_\alpha\partial_\gamma \mathbb{H}_N^\dagger (\theta_N)	
=&-\frac{2}{N}\sum_{n=1}^N\frac{\epsilon_n(\theta_N)}{V_n}\left(\frac{\partial_\alpha a_{n-1}(\alpha_N)}{c_{n-1}(\gamma_N)}\right)\otimes 
\left(\frac{\partial_\gamma c_{n-1}(\gamma_N)}{c_{n-1}(\gamma_N)}\right)\\
=&\frac{2}{N}\sum_{n=1}^Ng_\beta(\epsilon_N(\theta_N))\left(\frac{\partial_\alpha a_{n-1}(\alpha_N)}{c_{n-1}(\gamma_N)}\right)\otimes 
\left(\frac{\partial_\gamma c_{n-1}(\gamma_N)}{c_{n-1}(\gamma_N)}\right)\\
&-\frac{2}{N}\sum_{n=1}^N\left(g_\beta(\epsilon_N(\theta_N))+\frac{\epsilon_n(\theta_N)}{V_n}\right)\left(\frac{\partial_\alpha a_{n-1}(\alpha_N)}{c_{n-1}(\gamma_N)}\right)\otimes 
\left(\frac{\partial_\gamma c_{n-1}(\gamma_N)}{c_{n-1}(\gamma_N)}\right).
\end{align*}
The first term is $o_p(1)$ by Proposition \ref{lem:lln} together with the fact $\int g_\beta(x)\phi_\beta(x)\dif x=0$, 
and the second term converges to $0$ in $L^2$ since it is a sum of conditionally independent random variables. Then 
$(Nh^{(1-\beta)})^{-1}\partial_\alpha\partial_\gamma \mathbb{H}_N^\dagger (\theta_N)	\rightarrow 0$ in probability. 

In the same way, decomposing the sum into the main term and the sum of independent variables, we have
\begin{align*}
N^{-1}\partial_\gamma^2 \mathbb{H}_N^\dagger (\theta_N)	
=&-\frac{2}{N}\sum_{n=1}^N\frac{\epsilon_n(\theta_N)^2}{V_n}
\left(\frac{\partial_\gamma c_{n-1}(\gamma_N)}{c_{n-1}(\gamma_N)}\right)^{\otimes 2}\\
&+
\frac{1}{N}\sum_{n=1}^N\left(\frac{\epsilon_n(\theta_N)^2}{V_n}-1\right)
\left\{
\left(\frac{\partial_\gamma^2 c_{n-1}(\gamma_N)}{c_{n-1}(\gamma_N)}\right)
-\left(\frac{\partial_\gamma c_{n-1}(\gamma_N)}{c_{n-1}(\gamma_N)}\right)^{\otimes 2}\right\}\\
=&\frac{2}{N}\sum_{n=1}^N\epsilon_n(\theta_N)g_\beta(\epsilon_n(\theta_N))
\left(\frac{\partial_\gamma c_{n-1}(\gamma_N)}{c_{n-1}(\gamma_N)}\right)^{\otimes 2}\\
&+
\frac{1}{N}\sum_{n=1}^N\left(-\epsilon_n(\theta_N)g_\beta(\epsilon_n(\theta_N))-1\right)
\left\{
\left(\frac{\partial_\gamma^2 c_{n-1}(\gamma_N)}{c_{n-1}(\gamma_N)}\right)
-\left(\frac{\partial_\gamma c_{n-1}(\gamma_N)}{c_{n-1}(\gamma_N)}\right)^{\otimes 2}\right\}\\
&+o_p(1).
\end{align*}
The first term in the right-hand side converges to $-C_\gamma^\dagger(\beta)\Sigma_{T,\gamma}(\theta_0)$
and the second term is $o_p(1)$ by Lemma \ref{lem:lln} together with the fact that $\int xg(x)\phi_\beta(x)\dif x=-1$. 
Then, we obtain
\begin{align*}
-N^{-1}\partial_\gamma^2 \mathbb{H}_N^\dagger (\theta_N)		\rightarrow C_\gamma^\dagger(\beta)\Sigma_{T,\gamma}(\theta_0)
\end{align*}
in probability. Almost the same arguments give convergence of the thrice derivatives. We omit the detail. 
\end{proof}

\subsection{Proof of Theorem \ref{prop:mwg}}\label{sec:proof_mwg}

The proof of Theorem \ref{prop:mwg} is an application of Proposition \ref{prop:mtk}. Since $\Pi(u|s)=N_p(I(\theta_0)^{-1}s, I(\theta_0)^{-1})$ and $Q(u,\dif v|s)=N_p(u,\Sigma)$,  Assumption \ref{as:kernel} is easy to check. Ergodicity is also obvious by irreducibility. 
The Bernstein-von Mises theorem is already proved in Theorem \ref{hm:thm_bvM} which is a sufficient condition for (\ref{BvMMH}) in this case. 
We will complete the proof  by showing the convergence of the acceptance ratio $A_N$.

Let $\theta_N=\theta_0+D_N^{-1}u$ and $\theta_N^*=\theta_0+D_N^{-1}v$. 
By (\ref{eq:clan_fishermat}) and (\ref{eq:clan_remainder}) together with Taylor's expansion, we have the difference of the log quasi-posterior density satisfies 
\begin{align*}
	\xi_N &:=\{\mathbb{H}_N^\dagger(\theta^*_N)+\log\pi(\theta_N^*)\}-\{\mathbb{H}_N^\dagger(\theta_N)+\log\pi(\theta_N)\}\\
	&=\partial_\theta\mathbb{H}^\dagger_N(\theta_N^*-\theta_N)
	+\frac{1}{2}\partial_\theta^2\mathbb{H}_N^\dagger[(\theta_N^*-\theta)^{\otimes 2}]+o_p(1)\\
	&=\Delta^\dagger_N(\theta_N)[v-u]-\frac{1}{2}I^\dagger(\theta_0)[(v-u)^{\otimes 2}]+o_p(1).
\end{align*}
Now we rewrite the two terms in the right-hand side of the above equation. 
First, observe that difference of the score function at $\theta_N$ and $\theta_0$ is 
$$
\Delta_{N}(\theta_N)-\Delta_{N}(\theta_0)=D_N^{-1}\partial_\theta^2\mathbb{H}_N(\theta_0)D_n^{-1}[u]+o_p(1)=-I(\theta_0)[u]+o_p(1)
$$
by Taylor's expansion. Also, by $I^\dagger(\theta_0)=I^*(\theta_0)+I(\theta_0)$, we have the following identity among Fisher information matrices: 
$$
I(\theta_0)[u, v-u] +\frac{1}{2}I^\dagger(\theta_0)[(v-u)^{\otimes 2}]=I(\theta_0)[v^{\otimes 2}-u^{\otimes 2}]+\frac{1}{2}I^*(\theta_0)[(v-u)^{\otimes 2}]. 
$$
By these facts, we can rewrite $\xi_N$ by
\begin{align*}
\xi_N=&\Delta_N(\theta_N)[v-u]+\Delta(\theta_N)[v-u]-\frac{1}{2}I^\dagger(\theta_0)[(v-u)^{\otimes 2}]+o_p(1)\\
=&\Delta_N(\theta_N)[v-u]+\Delta(\theta_0)[v-u]-I(\theta_0)[u,v-u]-\frac{1}{2}I^\dagger(\theta_0)[(v-u)^{\otimes 2}]+o_p(1)\\
=&\eta_N(\Delta_{N}^*(\theta_N))+o_p(1)
\end{align*}
where 
$$
\eta_N(w)=\Delta_{N}(\theta_0)[v-u]+w[v-u]-I(\theta_0)[v^{\otimes 2}-u^{\otimes 2}]-\frac{1}{2}I^*(\theta_0)[(v-u)^{\otimes 2}]
$$
for $w\in\mathbb{R}^p$. By the expression of the difference of the log quasi-posterior density together with the function $\psi(x)=\min\{1,\exp(x)\}$, we have the expression of the acceptance probabilities: 
\begin{align*}
	&A_N(\theta_N,\theta^*_N|X^N)=\tilde{\mathbb{E}}\left[\psi(\xi_N)|\omega\right], A(u, v|\Delta_{N}(\theta_0), I(\theta_0), I^*(\theta_0))=\mathbb{E}\left[\psi\left(\eta_N(W)\right)|w\right]
\end{align*}
where $W\sim N(0, I^*(\theta_0))$. 
On the other hand,  we have
\begin{align*}
\left|\tilde{\mathbb{E}}[\psi(\xi_N)|\omega]-{\mathbb{E}}[\psi(\eta_N(W))|w]\right|
	\le& \left|\tilde{\mathbb{E}}[\psi(\xi_N)|\omega]-{\mathbb{E}}[\psi(\eta_N(\Delta_{N}^*(\theta_N))|w]\right|\\
	&+ \left|\tilde{\mathbb{E}}[\psi(\eta_N(\Delta_{N}^*(\theta_N))|\omega]-{\mathbb{E}}[\psi(\eta_N(W))|w]\right|\\
\le& o_p(1)+\|\mathcal{L}(\Delta^*_{N,T}(\theta_N)|w)-N(0, I^*(\theta_0))\|_{\mathrm{BL}}=o_p(1). 
\end{align*}
Hence the claim follows.



\begin{thebibliography}{99}    

\bibitem{bns}
{\sc Barndorff-Nielsen}, O. E. \& {\sc Shephard}, N. (2001). Non-Gaussian OU-based models and some of their
uses in financial economics (with discussion). \emph{J. Roy. Statist. Soc. B}, {\bf 63}, 167--241.

\bibitem{BKP71}
{\sc Borwanker}, J., {\sc Kallianpur}, G., \& {\sc Rao}, B. P. (1971). The Bernstein-von Mises theorem for Markov processes. \emph{The Annals of Mathematical Statistics}, {\bf 43}, 1241--1253.

\bibitem{CleGlo15}
{\sc Cl\'{e}ment}, E. and {\sc Gloter}, A. (2015). Local asymptotic mixed normality property for discretely observed stochastic differential equations driven by stable L\'{e}vy processes. \emph{Stochastic Process. Appl.} {\bf 125}, no. 6, 2316--2352.

\bibitem{cont}
{\sc Cont}, R. \& {\sc Tankov}, P.~(2004). \emph{Financial modelling with Jump Processes}. Chapman \& Hall: London.

\bibitem{2015arXiv151104992D}
{\sc {Deligiannidis}, G., {Doucet}, A., and {Pitt}, M.~K.}~(2015). 
\newblock {The Correlated Pseudo-Marginal Method}.
arXiv:1511.04992v2 (v2). 

\bibitem{gander}
{\sc Gander}, M. P. S. \& {\sc Stephens}, D. A. (2007). Simulation and inference for stochastic volatility models
driven by L\'{e}vy processes. \emph{Biometrika},  {\bf 94}, 627--646.

\bibitem{golight}
{\sc Golightly}, A., \& {\sc Wilkinson}, D. J.~(2008). Bayesian inference for nonlinear multivariate diffusion models observed with error.
\emph{Comp. Stat. Data Anal.}, {\bf 52}, 1674--1693.

\bibitem{griffin}
{\sc Griffin}, J. \& {\sc Steel}, M. (2006). Inference with non-Gaussian Ornstein-Uhlenbeck processes for stochastic
volatility. \emph{J. Econom.}, {\bf 134}, 605--644.

\bibitem{IH}
{\sc Ibragimov}, I. A. \& {\sc Has'minskii}, R. Z. (1981).
\newblock \emph{{Statistical estimation, asymptotic theory}}.
\newblock New York : Springer-Verlag.

\bibitem{JacPro11}
{\sc Jacod}, J., and {\sc Protter}, P. E. (2011). \emph{Discretization of processes}. Springer.

\bibitem{JW94}
{\sc Janicki}, A. \& {\sc Weron}, A. (1994).
\newblock \emph{Simulation and chaotic behavior of $\al$-stable stochastic processes.}
Monographs and Textbooks in Pure and Applied Mathematics, 178. Marcel Dekker, Inc., New York.

\bibitem{jasra}
{\sc Jasra}, A.,  {\sc Stephens}, D. A., {\sc Doucet}, A. \& {\sc Tsagaris}, T.~(2011).
Inference for L\'evy driven stochastic volatility models via adaptive
sequential Monte Carlo. {\it Scand. J. Statist.}, {\bf 38}, 1--22.

\bibitem{Kamatani10}
{\sc Kamatani}, K.~(2014).
Local consistency of {M}arkov chain {M}onte {C}arlo methods. \emph{Ann. Inst. Statist. Math.}, {\bf 66}, 63--74.

\bibitem{2016arXiv160202889K}
{\sc Kamatani, K.}~(2017).
\newblock {Ergodicity of Markov chain Monte Carlo with reversible proposal}.
\newblock {\em Journal of Applied Probability 54\/}.

\bibitem{MatTak06}
{\sc Matsui}, M. \& {\sc Takemura}, A.~(2006). 
\newblock Some improvements in numerical evaluation of symmetric stable density
  and its derivatives.
\newblock {\em Comm. Statist. Theory Methods}, 35(1-3):149--172.

\bibitem{marin}
{\sc Marin}, J.-M., {\sc Pudlo}, P., {\sc Robert}, C.P. \& {\sc Ryder}, R. (2012). Approximate Bayesian computational methods. 
\emph{Statist. Comp.}, {\bf 22}, 1167--1180.

\bibitem{Mas09}
{\sc Masuda}, H. (2009). Joint estimation of discretely observed stable L\'{e}vy processes with symmetric L\'{e}vy density. \emph{J. Japan Statist. Soc.} {\bf 39}, no. 1, 49--75.

\bibitem{masuda13}
{\sc Masuda}, H.~(2013).
Convergence of Gaussian quasi-likelihood random fields for ergodic L\'evy driven SDEs observed at high-frequency. \emph{Ann. Statist.},  {\bf 41}, 1593--1641.

\bibitem{masuda16.sqmle}
{\sc Masuda}, H.~(2017).
Non-Gaussian quasi-likelihood estimation of SDE driven by locally stable L\'{e}vy process. arXiv:1608.06758 (v3)

\bibitem{MR1723510}
{\sc Neal, R.~M.} (1999).
\newblock Regression and classification using {G}aussian process priors.
\newblock In {\em Bayesian statistics, 6 ({A}lcoceber, 1998)}.
\newblock Oxford Univ. Press, New York pp.~475--501.

\bibitem{peters}
{\sc Peters} G.W., {\sc Sisson} S.A. \& {\sc Fan} Y.~(2012). Likelihood-free Bayesian inference for $\alpha$-stable models. 
\emph{Comp. Stat. Data Anal.}, {\bf 56}, 3743--3756.


\bibitem{fbasics}
{\sc Rmetrics Core Team, Wuertz, D., Setz, T., and Chalabi, Y.}
\newblock {\em fBasics: Rmetrics - Markets and Basic Statistics}, 2014.
\newblock R package version 3011.87.

\bibitem{roberts}
{\sc Roberts}, G. O. \& {\sc Stramer}, O.~(2001). On inference for partially observed nonlinear diffusion models using the Metropolis--Hastings algorithm.
\emph{Biometrika}, {\bf 88}, 603--621.

\bibitem{SamTaq}
{\sc Samorodnitsky}, G. \& {\sc Taqqu}, M. S.~(1994). 
\newblock \emph{{Stable non-Gaussian random processes. Stochastic models with infinite variance.}} Chapman \& Hall, New York.

\bibitem{MR1739520}
{\sc Sato}, K.~(1999).
\newblock \emph{L\'evy processes and infinitely divisible distributions},
 volume~68 of \emph{Cambridge Studies in Advanced Mathematics}.
\newblock Cambridge University Press, Cambridge.


\bibitem{Yos11}
{\sc Yoshida}, N. (2011). Polynomial type large deviation inequalities and quasi-likelihood analysis for stochastic differential equations. \emph{Ann. Inst. Statist. Math.}, {\bf 63}, 431--479.




\end{thebibliography}
\end{document}